\documentclass[10pt]{amsart}

\usepackage{hyperref}
\usepackage{amsmath}
\usepackage{graphicx}
\usepackage{tikz}
\usepackage{tikz-cd}
\usepackage{tikz-3dplot}
\usepackage{subcaption}
\usepackage{enumitem}

\usetikzlibrary{snakes}
\usepackage[all]{xy}
\usepackage{framed}
\usepackage{mathrsfs}%For mathscr
\usepackage{amssymb}

\usepackage{colonequals} %\colonequal

\theoremstyle{plain}
\newtheorem{theorem}{Theorem}[section]
\newtheorem{lemma}[theorem]{Lemma}
\newtheorem{proposition}[theorem]{Proposition}
\newtheorem{corollary}[theorem]{Corollary}

\theoremstyle{definition}
\newtheorem{definition}[theorem]{Definition}
\newtheorem{example}[theorem]{Example}
\newtheorem{remark}[theorem]{Remark}

\numberwithin{equation}{section}

\newcommand{\QQ}{\mathbb{Q}}
\newcommand{\RR}{\mathbb{R}}
\newcommand{\ZZ}{\mathbb{Z}}
\newcommand{\NN}{\mathbb{N}}
\newcommand{\PP}{\mathbb{P}}

\DeclareMathOperator{\Pic}{Pic}

\DeclareMathOperator{\GL}{GL}
\DeclareMathOperator{\wSR}{wSR}
\DeclareMathOperator{\SR}{SR}
\DeclareMathOperator{\spa}{span}
\DeclareMathOperator{\CDiv}{CDiv}
\DeclareMathOperator{\Div}{Div}
\DeclareMathOperator{\clas}{Cl}

\begin{document}

\title[Integral bases]{Integral bases for the second degree cohomology of 4-dimensional toric orbifolds}

\author[T. So]{Tseleung So}
\address{Institute of Mathematical Science, Pusan National University, Busan 46241, Republic of Korea}
\email{larry.so.tl@gmail.com}

\author[J. Song]{Jongbaek Song}
\address{Department of Mathematics Education, Pusan National University, Busan, Republic of Korea}
\email{jongbaek.song@pusan.ac.kr}

% \date{\today}
\thanks{The authors are supported by the National Research Foundation of Korea (NRF) grant funded by the Korea government(MSIT) (RS-2025-00555914). }

\subjclass[2020]{
Primary: 57S12, 14M25;  Secondary: 13F55, 57R18}

\keywords{toric orbifold, toric variety, integral cohomology, Stanley--Reisner ring, Cartier divisor, Picard group.}

\maketitle 
\abstract 
We study toric orbifolds of real dimension four with vanishing odd-degree cohomology and obtain a basis for its degree-two equivariant cohomology with integral coefficients by identifying it with the intersection of certain lattices. As applications, we provide an alternative construction of the \emph{algebraic cellular basis} for integral ordinary cohomology \cite{FSS2}. In addition, when the toric orbifold is an algebraic variety, we determine its Cartier divisor group and Picard group.
\endabstract

\section{Introduction}

The integral cohomology of a toric orbifold captures information about its singularities that is often invisible in rational cohomology. For instance, the ordinary and equivariant cohomology with rational coefficients of weighted projective spaces~$\mathbb{P}^n(\chi)$ are isomorphic to those of projective spaces $\PP^n$. In contrast,  Kawasaki \cite{Ka} showed that the multiplication structure of the integral cohomology of~$\PP^n(\chi)$ depends on its weight vector $\chi \in \NN^{n+1}$, which encodes the orbifold singularities of the space. Moreover, determining generators for its integral equivariant cohomology also involves nontrivial arithmetic calculations depending on the weight vector \cite{BFR}. In this sense, the integral (equivariant) cohomology theory for orbifolds is considerably subtler than its rational counterpart. This problem is already non-trivial even in degree $2$, where one seeks explicit integral classes arising from the underlying combinatorial data of a toric orbifold. 

For a toric orbifold $X(P, \lambda)$ associated with combinatorial data $(P, \lambda)$, called a \emph{characteristic pair} (see Section \ref{subsec_toric_orb}), this problem has been partially addressed, in the case where the odd degree cohomology of $X(P, \lambda)$ vanishes, by the \emph{weighted Stanley--Reisner ring} $\wSR(P, \lambda)$ introduced in \cite{BSS, DKS}. It gives an algebraic model for the integral equivariant cohomology ring of such a toric orbifold. By definition, $\wSR(P,\lambda)$ is a subring of the Stanley--Reisner ring $\SR(P)$, determined by the integrality conditions (see Definitions \ref{def_int_cond} and \ref{def_wSR}). This description of $\wSR(P, \lambda)$ makes it possible to test whether a given element of $\SR(P)$ belongs to $\wSR(P,\lambda)$. However, it does not in general provide explicit generators or bases, so the problem must be treated on a case-by-case basis.

In this paper we focus on the real four dimensional case.
Our main result gives a full description of the degree-two component $\wSR^2(P,\lambda)$ of the weighted Stanley--Reisner ring as follows.

\begin{theorem}[restated in Theorem~\ref{thm_main}]\label{thm_main intro}
Let $X(P,\lambda)$ be a toric orbifold of real dimension four with trivial $H^3(X(P,\lambda);\ZZ)$. As $\ZZ$-modules, it follows that 
\[
H^2_{T^2}(X(P,\lambda);\ZZ)\cong\wSR^2(P,\lambda)\cong\ZZ\langle \textbf{a}\rangle\oplus\ZZ\langle\textbf{b}\rangle\oplus\phi(K),
\]
where $\textbf{a},\textbf{b}$ are vectors deduced from $\lambda$, $K$ is the space of some linear relations and  $\phi$ is a linear map (See \eqref{eq_a_b}, \eqref{eq_K} and \eqref{eq_phi_linear_operator}, respectively, for their explicit definitions). 
\end{theorem}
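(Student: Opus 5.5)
The first isomorphism $H^2_{T^2}(X(P,\lambda);\ZZ)\cong\wSR^2(P,\lambda)$ is the known one from \cite{BSS, DKS}, valid when odd-degree cohomology vanishes. For a 4-dimensional toric orbifold, vanishing of $H^3$ already forces this, since $H^1=0$ by simple connectivity. So the work is to describe $\wSR^2(P,\lambda)$ as a $\ZZ$-module.

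In degree two, $\SR^2(P)$ is the free module on the facet generators $x_1,\dots,x_m$, where $P$ is an $m$-gon with characteristic vectors $\lambda_i\in\ZZ^2$. An element $f=\sum c_ix_i$ restricts at each vertex $v=F_i\cap F_{i+1}$ to the linear form $c_i u_i^{v}+c_{i+1}u_{i+1}^{v}$, where $\{u^v_i,u^v_{i+1}\}$ is the dual basis to $\{\lambda_i,\lambda_{i+1}\}$ in $\QQ^2$. The integrality condition says this restriction lies in $\ZZ^2$. Hence $\wSR^2(P,\lambda)=\bigcap_v L_v$, where $L_v\subset\ZZ^m$ is the lattice of $c$ whose local restriction at $v$ is integral. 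This is the ``intersection of lattices'' description.

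\textbf{Step 1: the global linear part.} Each $u\in(\ZZ^2)^*$ gives $f_u=\sum_i\langle u,\lambda_i\rangle x_i$, whose restriction at every vertex is $u$ itself, so $f_u$ is integral everywhere. Taking $u=e_1^*,e_2^*$ yields the vectors $\textbf{a}=(\langle e_1^*,\lambda_i\rangle)_i$ and $\textbf{b}=(\langle e_2^*,\lambda_i\rangle)_i$. I expect these are the vectors in \eqref{eq_a_b}.

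\textbf{Step 2: the splitting.} There is a map $r\colon\wSR^2\to\ZZ^2$ given by restriction to a fixed vertex $v_0$. Using the integrality condition at $v_0$, $r$ is surjective and is split by $u\mapsto f_u$. Its kernel consists of the $c$ with $c_{i_0}=c_{i_0+1}=0$ that are integral at every vertex. I then move around the polygon. If the restriction at $F_{i-1}\cap F_i$ is $w_i\in\ZZ^2$, then the restriction at $F_i\cap F_{i+1}$ is $w_i+(\text{multiple of a form vanishing on }\lambda_i)$. The kernel is therefore parametrized by the successive integral ``jumps'' along the edges. These jumps are constrained by a closing relation that returns to $0$ at $v_0$, a linear relation among the primitive normals of the $\lambda_i$. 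This closing relation is what I expect $K$ in \eqref{eq_K} to be, and $\phi$ in \eqref{eq_phi_linear_operator} should be the map sending a relation to the corresponding element $\sum c_ix_i$. The coefficients are recovered via $c_i=\langle w,\lambda_i\rangle$ at a vertex on $F_i$.

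\textbf{Step 3: the main obstacle.} I must show that $\phi$ maps $K$ isomorphically onto the kernel, with image meeting $\ZZ\langle\textbf{a}\rangle\oplus\ZZ\langle\textbf{b}\rangle$ trivially. Injectivity should be clear, because the jumps are recovered from consecutive local restrictions. The delicate part is integrality in both directions. First, an integral element must produce jumps with integer coefficients. Second, every integral relation in $K$ must produce $c_i\in\ZZ$, not merely in $\QQ$, which matters when $|\det(\lambda_i,\lambda_{i+1})|>1$. Here I would write each jump as an integer multiple of the primitive vector $\lambda_i^\perp\in(\ZZ^2)^*$ annihilating $\lambda_i$. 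I would then check that $\langle w,\lambda_i\rangle$ stays integral precisely when these multiples are integers, using that the local group at each vertex is $\ZZ^2/\langle\lambda_i,\lambda_{i+1}\rangle$. The case analysis with nonprimitive or non-unimodular adjacent pairs is where I expect the real difficulty.
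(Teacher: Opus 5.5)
Your route is the paper's. Your integral vertex forms are its pairs $(p_i,q_i)$ from \eqref{eq_coord_condition}. Your edge jumps are its $t_i(-b_i,a_i)$, your closing relation is $t\in K$, and your splitting by restriction at a base vertex is its directness argument via coordinates $1$ and $m$. (To get exactly the paper's $\phi$, take $v_0=F_m\cap F_1$ and start the jumps at $F_1$.)

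The gap is that you stop at the one step that actually constitutes the proof, and you misplace its difficulty. The only input needed is that each $\lambda_i=(a_i,b_i)$ is primitive. Hence the integral linear forms vanishing on $\lambda_i$ are exactly $\ZZ(-b_i,a_i)$.
\begin{enumerate}
\item For $\mathbf{x}\in\bigcap_i L_i$, the two integral vertex forms at the ends of $F_i$ both take the value $x_i$ on $\lambda_i$. So their difference is $t_i(-b_i,a_i)$ with $t_i\in\ZZ$.
\item Summing around the polygon gives $t\in K$. Pairing the partial sums with $\lambda_i$ gives $x_i=\alpha a_i+\beta b_i+\sum_{j<i}\det(\lambda_j,\lambda_i)t_j$, where $(\alpha,\beta)$ is the form at $v_0$.
\item Conversely, for $t\in K$ the partial sums $\sum_{j\le i}t_j(-b_j,a_j)$ are integral forms. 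They close up because $t\in K$, and each one directly witnesses that $\phi(t)$ lies in the corresponding $L_i$.
\end{enumerate}
No case analysis on $|\det(\lambda_i,\lambda_{i+1})|$ or on local groups is required. Non-primitive $\lambda_i$ never occur, by definition of a characteristic function.

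The check you propose is also the wrong criterion. You say that $\langle w,\lambda_i\rangle$ is integral exactly when the jump multiples are integers. If this refers to the coefficients $c_i$, it is false: integrality has to be imposed on the vertex forms themselves, not on their values at the $\lambda_i$. For example, take $\lambda_1=(-2,1)$, $\lambda_2=(1,-2)$, $\lambda_3=(1,0)$, $\lambda_4=(0,1)$, which satisfy $H^3=0$, and $\xi_2=(0,1,-1,2)\in K$. The non-integral relation $\tfrac12\xi_2$ gives $\phi(\tfrac12\xi_2)=(0,0,1,0)\in\ZZ^4$. This vector is not in $L_2$, because the form taking values $0,1$ on $\lambda_2,\lambda_3$ is $(1,\tfrac12)$. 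So an argument that tracks only the integrality of the $c_i$ cannot prove that $\ker r=\phi(K)$. Tracking integrality of the forms, as above, proves it immediately.
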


The first isomorphism follows from the results of \cite{BSS,DKS}, whereas the second isomorphism is new, which will be established in Section \ref{sec_lat_int}. Here we emphasize that a basis of $K$ can be obtained by some elementary methods in linear algebra, since $K$ is determined by a system of linear equations over integers. Thus, Theorem~\ref{thm_main intro} yields an explicit basis of $\wSR^2(P,\lambda)$, and hence that of the equivariant cohomology group $ H^2_{T^2}(X(P,\lambda);\ZZ)$.

Two applications of Theorem \ref{thm_main intro} are given in Section~\ref{sect_applications}.
First, we apply it to provide an alternative definition of the \emph{algebraic cellular basis} of~\cite{FSS2}. This is a special additive basis for $\tilde{H}^*(X(P,\lambda);\ZZ)$ constructed by using methods from homotopy theory and combinatorics. Its well-definedness and existence were established through a series of long and technical proofs. Theorem \ref{thm_main intro} redefines the algebraic cellular basis of $X(P, \lambda)$ in a more general setting and provides a simpler proof (see Theorem~\ref{thm_alg cellular basis}).

Second, we consider the case where $X(P, \lambda)$ is a toric variety $X_\Sigma$ associated with a fan $\Sigma$. We  verify that the group of torus-invariant Cartier divisors $\CDiv_T(X_{\Sigma})$ and its Picard group $\Pic(X_{\Sigma})$ are isomorphic to $\wSR^2(P,\lambda)$ and $\phi(K)$, respectively.
In general, computing $\CDiv_T(X_\Sigma)$ and $\Pic(X_\Sigma)$ is subtle, as it requires solving integral Cartier conditions encoded by the fan (see, for example, \cite[Theorem~4.2.8]{CLS} or \cite[Section 3.3]{Ful}). As another application of the main result, 
we obtain a complete description of these groups and compute the index of $\Pic(X_\Sigma)$ in the class group $\clas(X_\Sigma)$.

\section{Toric orbifolds and Weighted Stanley--Reisner rings}\label{sect_preliminary}
\subsection{Toric orbifolds}\label{subsec_toric_orb}
We begin with a brief summary of a toric orbifold from a constructive viewpoint. Let $P$ be an $n$-dimensional simple polytope and let $\mathcal{F}(P)$ denote the set of its facets. For each vertex $v$ of $P$, write $\mathcal{F}_v(P)$ for the subset of~$\mathcal{F}(P)$ consisting of facets that contain $v$.

A \emph{characteristic function} $\lambda\colon \mathcal{F}(P)\to \ZZ^n$ is a map such that
\begin{enumerate}
\item[(i)] each $\lambda(F)$ is primitive, 
\item[(ii)] for every vertex $v$, the set $\{\lambda(F)\mid F\in \mathcal{F}_v(P)\}$ is linearly independent.
\end{enumerate}
The pair $(P,\lambda)$ is called a \emph{characteristic pair}.

Since $P$ is simple, each face $E$ of codimension $k$ is the intersection of exactly $k$ facets, that is, $E=F_{i_1}\cap \dots\cap F_{i_k}$ for some $F_{i_1},\ldots,F_{i_k}\in\mathcal{F}(P)$. This expression is unique up to permutation. Identifying $\ZZ^n$ with the standard lattice in the Lie algebra of the $n$-dimensional compact torus $T^n$, we denote by $T_E$ the subgroup of~$T^n$ generated by $\{\lambda(F_{i_j})\mid j=1, \dots, k\}$. 

We then define the \emph{toric orbifold} associated with $(P,\lambda)$ as the quotient space 
\begin{equation}\label{eq_cont_toric_orb}
X(P, \lambda) \colonequals P\times T^n /_\sim,
\end{equation}
equipped with the canonical $T^n$-action, where $(p,t)\sim (q,s)$ if and only if $p=q$ and $t^{-1}s \in T_E$, with $E$ being the unique face whose relative interior contains $p$. If, in addition, for every vertex $v$, the set $\{\lambda(F)\mid F\in \mathcal{F}_v(P)\}$ forms an integral basis of~$\ZZ^n$, then $X(P, \lambda)$ is a smooth manifold and is called a \emph{quasitoric manifold}. Otherwise, it is an orbifold in general. For a more axiomatic definition of toric orbifolds (and quasitoric manifolds), see \cite{DJ, PS} and \cite{BP-book}. 

\begin{remark}\label{rmk_toric_var}
When $P$ is a simple rational polytope in $M\otimes_\ZZ \RR$, where $M$ is the character lattice of $T^n$, and $\lambda$ is defined by assigning to each facet its primitive outward (or inward) normal vector, one obtains a symplectic toric orbifold or a projective toric variety with orbifold singularities (see, for example, \cite{LeTo} and \cite[Chapters 2,3]{CLS}). It turns out that these spaces are  equivariantly homeomorphic to the toric orbifold $X(P, \lambda)$ defined in \eqref{eq_cont_toric_orb}. The reader is  referred to \cite{Fr} and the references therein. 
\end{remark}

\subsection{Equivariant cohomology of $X(P, \lambda)$}
When $X(P, \lambda)$ is smooth (that is, when $X(P,\lambda)$ is a quasitoric manifold), its integral equivariant cohomology ring $H^\ast_{T^n}(X(P,\lambda);\ZZ)$ is isomorphic to the \emph{Stanley--Reisner ring}
\[
\SR(P) = \ZZ[x_1,\ldots,x_m] / \mathcal{I},
\]
where 
$\deg x_i =2$ and $\mathcal{I}$ is the ideal given by
\begin{equation}\label{eq_ideal I}
\mathcal{I}=\langle x_{i_1}\cdots x_{i_k}\mid F_{i_1}\cap\cdots\cap F_{i_k} = \emptyset \rangle.
\end{equation}

When $X(P,\lambda)$ is not smooth, this isomorphism still holds with rational coefficients, that is,
\begin{equation}\label{eq_rational_cohom_isom}
H^\ast_{T^n}(X(P, \lambda);\QQ) \cong \SR(P)\otimes\QQ.
\end{equation}
However, in general, the integral version of this isomorphism does not exist. Nevertheless, in some case $H^*_{T^n}(X(P,\lambda);\ZZ)$ can be identified with a subring $\wSR(P, \lambda)$ of $\SR(P)$, called the \emph{weighted Stanley--Reisner ring}, introduced in \cite{BSS, DKS}.\footnote{The notion of the weighted Stanley--Reisner ring was first introduced in \cite{BSS} for projective toric orbifolds. Subsequently, the authors of \cite{DKS} extended this concept to the broader class of \emph{torus orbifolds}, referring to it as the \emph{weighted face ring}. In this paper, we continue to use the term \emph{weighted Stanley--Reisner ring}.}
We recall its definition below.

\begin{definition}\label{def_int_cond}
Given a characteristic pair $(P,\lambda)$ and a vertex $v$ of $P$, write $\mathcal{F}_v(P)=\{F_{i_1},\ldots,F_{i_n}\}$ for the set of facets containing $v$. Define an $m$-tuple
\[
\mathbf{z}^v=(z^v_1,\ldots,z^v_m)\in \QQ[u_1,\ldots,u_n]^{\oplus m}
\]
of rational polynomials in $n$ variables as follows:
\begin{itemize}
\item $z^v_j = 0$ for $j\notin \{i_1, \dots, i_n\}$;
\item $\begin{bmatrix} z^v_{i_1} \\ \vdots \\ z^v_{i_n} \end{bmatrix} = 
\left[ \begin{array}{c|c|c} 
& & \\
\lambda(F_{i_1}) & \cdots &  \lambda(F_{i_n}) \\
& & 
\end{array}\right]^{-1}
\cdot \begin{bmatrix} u_1 \\ \vdots  \\ u_n \end{bmatrix}
$.
\end{itemize}
Then, a polynomial $f(x_1, \dots, x_m)\in \ZZ[x_1, \dots, x_m]$ is said to satisfy the \emph{integrality condition} with respect to $(P, \lambda)$ if the composite
\[
f(\mathbf{z}^v)=f(z^v_1,\ldots,z^v_m)
\]
lies in $\ZZ[u_1,\ldots,u_n]$ for every vertex $v$ of $P$.
We denote the set of all such polynomials by $\mathscr{Z}_{(P,\lambda)}$.
\end{definition}

Note that $\mathscr{Z}_{(P, \lambda)}$ forms a subring of $\ZZ[x_1, \dots, x_m]$, and
the ideal $\mathcal{I}$ in~\eqref{eq_ideal I} is also an ideal of $\mathscr{Z}_{(P,\lambda)}$. Indeed, for facets $F_{k_1}, \dots, F_{k_\ell}$ with $\bigcap_{j=1}^\ell F_{k_j}= \emptyset$, the polynomial $f(x_1, \dots, x_m)=\prod_{j=1}^\ell x_{k_j}$ satisfies $f(\mathbf{z}^v)=0$ for every vertex $v$ of $P$, and hence satisfies the integrality condition. 

\begin{definition}\label{def_wSR}
The \emph{weighted Stanley--Reisner ring} corresponding to a characteristic pair $(P, \lambda)$ is the quotient ring
\[
\wSR(P, \lambda) \colonequals \mathscr{Z}_{(P, \lambda)} /  \mathcal{I}
\]
where $\mathcal{I}$ is the ideal given in~\eqref{eq_ideal I}.
\end{definition}

\begin{proposition}(See \cite[Section 5]{BSS} or \cite[Theorem 4.11]{DKS})
\label{prop_eqiv_cohom_wSR}
For toric orbifolds $X(P, \lambda)$ with vanishing odd degree integral cohomology groups, there is a ring isomorphism 
\[
H^\ast_{T^n}(X(P, \lambda);\ZZ) \cong \wSR(P, \lambda).
\]
\end{proposition}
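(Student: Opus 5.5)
This is a cited result from \cite{BSS,DKS}. My plan is to reprove it by GKM-type localization, in the spirit of those references.

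The first step is to show that $H^\ast_{T^n}(X;\ZZ)$ is a free $H^\ast(BT^n;\ZZ)$-module and that restriction to the fixed points is injective. I would use the $q$-CW structure coming from a retraction sequence of $P$: vanishing odd cohomology forces the relevant Serre spectral sequence to degenerate, so the equivariant cohomology is free. Freeness then gives integral injectivity of
\[
H^\ast_{T^n}(X;\ZZ)\to \bigoplus_{v} H^\ast_{T^n}(\{v\};\ZZ)=\bigoplus_v \ZZ[u_1,\ldots,u_n],
\]
because the kernel is torsion (rationally localization is injective) and a free module has no torsion.

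The second step is to produce a map $\wSR(P,\lambda)\to H^\ast_{T^n}(X;\ZZ)$. Rationally we have \eqref{eq_rational_cohom_isom}, and $x_i$ corresponds to the equivariant Thom class of the characteristic suborbifold $\pi^{-1}(F_i)$. Its restriction to the fixed point $v$ equals the $i$-th coordinate of $\mathbf z^v$: the weights of the tangent representation at $v$ form the basis dual to $\{\lambda(F_{i_j})\}$, which after rescaling gives exactly the inverse matrix in Definition~\ref{def_int_cond}. Consequently an element $f\in\SR(P)\otimes\QQ$ lies in the image of integral cohomology inside rational cohomology only if $f(\mathbf z^v)\in\ZZ[u]$ for every $v$. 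By the freeness from the first step, integral cohomology embeds in rational cohomology, so $H^\ast_{T^n}(X;\ZZ)\subseteq \wSR(P,\lambda)$ as subsets of $\SR(P)\otimes\QQ$.

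The third step, the reverse inclusion, is the main obstacle. Here I would use the integral GKM description, which says that a tuple $(g_v)\in\bigoplus_v\ZZ[u]$ lies in the image iff it lies in the image of $\SR\otimes\QQ$. Equivalently, integral equivariant classes are exactly the rational classes with integral restrictions. I would prove this by induction along the retraction sequence, where each step attaches a cell $e^{2k}/G$ with $G$ finite. The local computation is to show that a class vanishing on earlier vertices and integral at the new vertex $v$ extends integrally. This uses that the equivariant cohomology of the attached piece is computed by $\ZZ[u]$ at $v$, and the absence of odd cohomology ensures no obstruction (torsion) appears in $H^{odd}$. With both inclusions, the two rings coincide inside $\bigoplus_v\ZZ[u]$. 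Since $\mathcal I$ is exactly the kernel of the localization map on $\SR(P)$, this gives the ring isomorphism.
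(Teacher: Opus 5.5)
Your Steps 1 and 2 essentially give the inclusion $H^\ast_{T^n}(X;\ZZ)\subseteq\wSR(P,\lambda)$. The gap is Step 3, the reverse inclusion, which is the real content of the proposition; the paper itself only quotes \cite{BSS,DKS}.

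The ``integral GKM description'' you invoke says that a tuple in $\bigoplus_v\ZZ[u_1,\ldots,u_n]$ comes from integral cohomology iff it comes from $\SR(P)\otimes\QQ$. That is literally the statement to be proved, so it cannot be used as a step.

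The induction along a retraction sequence also fails for the reason you give. The relative groups $H^{\mathrm{odd}}_{T}(X_i,X_{i+1};\ZZ)$ of the orbifold cells are nonzero torsion in general, and $H^{\mathrm{odd}}(X;\ZZ)=0$ does not control them. This already happens for $n=2$; write $T=T^2$.
\begin{itemize}
\item Remove a vertex $v=F_i\cap F_{i+1}$. Then $X_2$ is the preimage of the chain of remaining edges, and the link of $v$ is $S^3/G_v$.
\item This link has isotropy $T_{F_i}$ and $T_{F_{i+1}}$ over its two ends and trivial isotropy in between. Mayer--Vietoris gives $H^2_T(S^3/G_v;\ZZ)\cong\ZZ^2$.
\item The map $H^2(BT)\to H^2_T(S^3/G_v)$ is $\ell\mapsto(\ell(\lambda_i),\ell(\lambda_{i+1}))$ for a linear form $\ell$. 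Hence
\[
H^3_T(X,X_2;\ZZ)\cong\operatorname{coker}\bigl(H^2(BT)\to H^2_T(S^3/G_v)\bigr)\cong\ZZ/|\det(\lambda_i,\lambda_{i+1})|.
\]
\item Since $H^3_T(X;\ZZ)=0$, the restriction $H^2_T(X;\ZZ)\to H^2_T(X_2;\ZZ)$ has exactly this group as cokernel.
\end{itemize}
So integral classes on the earlier stage generally do not extend integrally, and you cannot reduce to classes vanishing at the earlier vertices. Your local claim only controls the ambiguity of an extension, not its existence. What must be shown is that the obstruction $\delta(c)$ vanishes precisely when the rationally determined value at $v$ is integral. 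That is the opposite of ``no torsion obstruction appears''.

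The missing idea is that every isotropy group of $X(P,\lambda)$ is a \emph{connected} subtorus $T_E$. For $T$-spaces with connected isotropy and equivariant cohomology free over $H^\ast(BT;\ZZ)$, the Chang--Skjelbred sequence is exact over $\ZZ$ (Franz--Puppe). So the image of localization is cut out by congruences $g_v\equiv g_w$ modulo the linear form $\alpha_e$ attached to each invariant $2$-sphere, and connectedness of $T_e$ makes $\alpha_e$ primitive. By Gauss's lemma, $\ZZ[u]\cap\alpha_e\QQ[u]=\alpha_e\ZZ[u]$. Hence the integral conditions are exactly the rational ones intersected with $\bigoplus_v\ZZ[u]$, which is $\wSR(P,\lambda)$ by your Step 2. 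For $n=2$ one can instead compute $\delta$ through the link as above and check that it vanishes iff the value at $v$ is integral, but some such argument has to be supplied.

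Two smaller points:
\begin{itemize}
\item In Step 1, degeneration of the Serre spectral sequence gives freeness over $H^\ast(BT^n;\ZZ)$ only once you also know $H^\ast(X;\ZZ)$ is torsion-free. That needs its own argument, though it is automatic for $n=2$.
\item In Step 2 you should say why a rational class with integral restrictions has integer coefficients in $\SR(P)$. Each monomial is supported on a face containing some vertex $v$, and the matrix $[\lambda(F_{i_1})|\cdots|\lambda(F_{i_n})]$ is integral, so integrality of $f(\mathbf z^v)$ forces integer coefficients.
\end{itemize}
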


Regarding ordinary integral cohomology, if $H^*(X(P,\lambda);\ZZ)$ is concentrated in even degrees, the Serre spectral sequence of the Borel fibration 
\[
X(P,\lambda) \overset{\iota}{\hookrightarrow} ET^n\times_{T^n} X(P,\lambda) \xrightarrow{\pi} BT^n
\]
collapses at $E_2$-page. Hence, the map $\iota^\ast$ induces an isomorphism 
\[
H^{\ast}_{T^n}(X(P, \lambda);\ZZ)\otimes_{H^\ast(BT^n;\ZZ)} \ZZ \to H^\ast(X(P, \lambda);\ZZ).
\]
One can refer to \cite[theorem 1.1]{FrPu}. Hence, in this case, one can combine the above isomorphism together with Proposition \ref{prop_eqiv_cohom_wSR} to obtain a ring isomorphism 
\begin{equation}\label{eq_cohmgy_isom}
\Phi\colon H^*(X(P,\lambda);\ZZ)\to\wSR(P,\lambda) / \mathcal{J}
\end{equation}
where $\mathcal{J}$ is the ideal generated by monomials $\{y_1,\ldots,y_m\}$ defined by
\[
\begin{bmatrix} y_1 \\ \vdots \\ y_m \end{bmatrix} = 
\left[ \begin{array}{c|c|c} 
& & \\
\lambda_1 & \cdots &  \lambda_m \\
& & 
\end{array}\right]
\cdot \begin{bmatrix} x_1 \\ \vdots  \\ x_n \end{bmatrix}.
\]

As mentioned in Introduction, the computation $\wSR(P, \lambda)$ depends on explicit data from the characteristic pair and must be carried out individually in each case. Moreover, eliminating redundant generators modulo the ideal $\mathcal{J}$ makes the determination of a basis more involved. In the following section, we resolve this problem for the case of $4$-dimensional toric orbifolds.

\section{Intersection of lattices}\label{sec_lat_int}
For the rest of this paper, we focus on $4$-dimensional toric orbifolds. Let $P$ be a $2$-dimensional polygon and let $\lambda\colon \mathcal{F}(P) \to \ZZ^2$ be a characteristic function. Suppose that $P$ is an $m$-gon with $m\geq 3$, and let $F_1, \dots, F_m$ denote its edges. Write
\[
\lambda_i=(a_i, b_i)\colonequals\lambda(F_i)
\] for $i=1, \dots, m$. Then each $\lambda_i$ is primitive and $\det(\lambda_i,\lambda_{i+1})\neq 0$ for $i=1, \dots, m$, where the indices are taken modulo $m$, that is, $\lambda_{m+1}=\lambda_1$. See Figure \ref{fig_labeling}. 

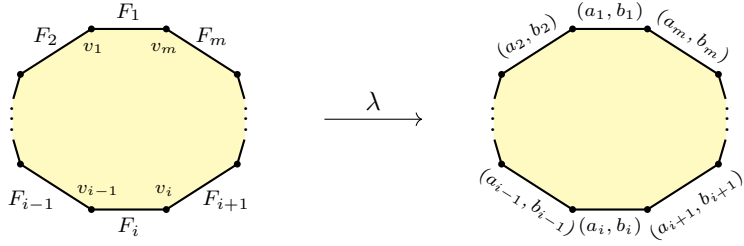
\begin{figure}
\begin{tikzpicture}[scale=1.3]
\coordinate (v0) at (10:1.2);
\coordinate (v00) at (350:1.2);

\coordinate (v1) at (0+22.5:1.2);
\coordinate (v2) at (45+22.5:1);
\coordinate (v3) at (90+22.5:1);
\coordinate (v4) at (135+22.5:1.2);
\coordinate (v41) at (170:1.2);

\coordinate (v50) at (190:1.2);
\coordinate (v5) at (180+22.5:1.2);
\coordinate (v6) at (225+22.5:1);
\coordinate (v7) at (270+22.5:1);
\coordinate (v8) at (315+22.5:1.2);

\draw[fill=yellow!30, yellow!30] (v0)--(v1)--(v2)--(v3)--(v4)--(v41)--(v50)--(v5)--(v6)--(v7)--(v8)--(v00)--cycle;

\draw[thick] (v0)--(v1)--(v2)--(v3)--(v4)--(v41);
\draw[thick] (v50)--(v5)--(v6)--(v7)--(v8)--(v00);

\node at (4:1.2) {$\vdots$};
\node at (176:1.2) {$\vdots$};

\foreach \v in {v1,v2,v3,v4,v5, v6,v7,v8}
    \fill (\v) circle (1pt);

\node at (45:1.2)  {\footnotesize$F_m$};
\node at (90:1.1)  {\footnotesize$F_1$};
\node at (135:1.2) {\footnotesize$F_2$};
\node at (220:1.3) {\footnotesize$F_{i-1}$};
\node at (270:1.1) {\footnotesize$F_i$};
\node at (320:1.3) {\footnotesize$F_{i+1}$};

\node[below] at (45+22.5:0.95) {\scriptsize$v_m$};
\node[below] at (90+22.5:0.95) {\scriptsize$v_1$};
\node[above] at (225+25.5:0.95) {\scriptsize$v_{i-1}$};
\node[above] at (270+22.5:0.95) {\scriptsize$v_i$};

\draw[->] (2,0)--(3,0);
\node[above] at (2.5,0) {$\lambda$};

\begin{scope}[xshift=140]
\coordinate (v0) at (10:1.2);
\coordinate (v00) at (350:1.2);

\coordinate (v1) at (0+22.5:1.2);
\coordinate (v2) at (45+22.5:1);
\coordinate (v3) at (90+22.5:1);
\coordinate (v4) at (135+22.5:1.2);
\coordinate (v41) at (170:1.2);

\coordinate (v50) at (190:1.2);
\coordinate (v5) at (180+22.5:1.2);
\coordinate (v6) at (225+22.5:1);
\coordinate (v7) at (270+22.5:1);
\coordinate (v8) at (315+22.5:1.2);

\draw[fill=yellow!30, yellow!30] (v0)--(v1)--(v2)--(v3)--(v4)--(v41)--(v50)--(v5)--(v6)--(v7)--(v8)--(v00)--cycle;

\draw[thick] (v0)--(v1)--(v2)--(v3)--(v4)--(v41);
\draw[thick] (v50)--(v5)--(v6)--(v7)--(v8)--(v00);

\node at (4:1.2) {$\vdots$};
\node at (176:1.2) {$\vdots$};

\foreach \v in {v1,v2,v3,v4,v5, v6,v7,v8}
    \fill (\v) circle (1pt);

\node[rotate=-32] at (45:1.2)  {\scriptsize$(a_m, b_m)$};
\node at (90:1.1)  {\scriptsize$(a_1, b_1)$};
\node[rotate=32] at (135:1.2) {\scriptsize$(a_2, b_2)$};
\node[rotate=-32] at (225:1.2) {\scriptsize$(a_{i-1}, b_{i-1})$};
\node at (270:1.1) {\scriptsize$(a_i, b_i)$};
\node[rotate=34] at (315:1.25) {\scriptsize$(a_{i+1}, b_{i+1})$};
\end{scope}

\end{tikzpicture}
\caption{Polygon and Characteristic function.}
\label{fig_labeling}
\end{figure}

In \cite[Section 5]{DKS}, the authors studied the integrality condition (Definition~\ref{def_int_cond}) for 2-dimensional characteristic pairs $(P, \lambda)$ and provided a more rigorous description of $\wSR(P, \lambda)$.

\begin{proposition}\cite[Theorem 5.2]{DKS}\label{prop_DKS}
Let $L_i$ be the free $\ZZ$-module  generated by row vectors of the following $(m\times m)$-matrix 
\begin{align*}
\Lambda_i & =
\left[ \begin{array}{c|cc|c}
I_{i-1} & &&\\ \hline 
& a_i & a_{i+1}&\\
& b_i& b_{i+1}&\\ \hline
& & & I_{m-i-1} 
\end{array}
\right] \text{   for }  i=1, \dots, m-1;\\
\Lambda_m &= \left[ \begin{array}{c|ccc|c}
a_1 && && a_m\\ \hline 
&&&& \\
& &I_{m-2}& & \\ 
&&&& \\ \hline 
b_1 & &&&  b_m 
\end{array} 
\right] \text{ for }i=m, 
%\end{split}
\end{align*}
where $I_k$ denotes the $(k\times k)$-identity matrix and all empty blocks are zeros. Then, the degree $2$ component of the weighted Stanley--Reisner ring $\wSR(P, \lambda)$ is given by 
\begin{equation}\label{eq_wSR2}
\wSR^2(P, \lambda) = 
\left\{ \sum_{i=1}^m c_i x_i ~ \middle|~  (c_1, \dots, c_m)\in \bigcap_{i=1}^m L_i \right\}. 
\end{equation}
\end{proposition}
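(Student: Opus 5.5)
Since the degree-two part of $\mathcal{I}$ is zero, $\wSR^2(P,\lambda)$ is exactly the set of linear forms $f=\sum_j c_jx_j$ with integer coefficients that satisfy the integrality condition of Definition~\ref{def_int_cond}. So I would unwind that condition one vertex at a time. For $n=2$, the vertex $v_i=F_i\cap F_{i+1}$ has $\mathcal{F}_{v_i}(P)=\{F_i,F_{i+1}\}$, and $\mathbf{z}^{v_i}$ has only two nonzero entries,
\[
\begin{bmatrix} z^{v_i}_i\\ z^{v_i}_{i+1}\end{bmatrix}=M_i^{-1}\begin{bmatrix}u_1\\u_2\end{bmatrix},\qquad M_i=\begin{bmatrix} a_i & a_{i+1}\\ b_i & b_{i+1}\end{bmatrix}.
\]
For the vertex $v_m=F_m\cap F_1$ I would order the facets as $(F_1,F_m)$; reordering only permutes the $z$'s. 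Substituting gives $f(\mathbf{z}^{v_i})=(c_i,c_{i+1})M_i^{-1}(u_1,u_2)^T$. This is integral exactly when the row vector $(c_i,c_{i+1})M_i^{-1}$ lies in $\ZZ^2$, that is, when $(c_i,c_{i+1})\in \ZZ^2 M_i$, the row lattice of $M_i$.

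Next I would identify the row space of $\Lambda_i$. Its rows are the standard basis vectors $e_j$ for $j\ne i,i+1$, together with the two rows $(a_i,a_{i+1})$ and $(b_i,b_{i+1})$ placed in coordinates $i,i+1$ (for $i=m$, in coordinates $1,m$). Since $\det M_i\neq 0$, $\Lambda_i$ is nonsingular, so these rows are linearly independent and $L_i$ is free of rank $m$. It follows that
\[
L_i=\{(c_1,\dots,c_m)\in\ZZ^m \mid (c_i,c_{i+1})\in \ZZ^2M_i\}.
\]
The inclusion $\subseteq$ is clear. For $\supseteq$, subtract the integer multiples of the $e_j$ to clear the coordinates $j\neq i,i+1$; what remains lies in the $\ZZ$-span of the two middle rows. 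For $i=m$, one checks that the rows $(a_1,\dots,a_m)$ and $(b_1,\dots,b_m)$ restrict on coordinates $(1,m)$ to the rows of $\left[\begin{smallmatrix}a_1&a_m\\ b_1&b_m\end{smallmatrix}\right]$, which matches the ordering chosen above.

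Intersecting over all $i$ then gives exactly the set of coefficient vectors for which $f(\mathbf{z}^{v})\in\ZZ[u_1,u_2]$ at every vertex $v$, which is \eqref{eq_wSR2}. The main obstacle is bookkeeping rather than anything deep. One has to check that the convention "$\lambda$-columns inverted, then applied to $u$" matches "row span of $\Lambda_i$", and not the column span or a transpose. The identity $(c)M^{-1}\in\ZZ^2 \iff c\in\ZZ^2M$ settles this. One also has to handle the wrap-around vertex $v_m$ correctly.
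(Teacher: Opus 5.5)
Your argument is correct and complete. Three facts carry it:
\begin{itemize}
\item $\mathcal{I}$ has no nonzero elements in degree two.
\item The substitution $x_j\mapsto z^v_j$ preserves degree, so $\wSR^2(P,\lambda)$ is exactly the set of integral linear forms satisfying Definition~\ref{def_int_cond}.
\item At $v_i=F_i\cap F_{i+1}$ that condition reads $(c_i,c_{i+1})M_i^{-1}\in\ZZ^2$, i.e.\ $(c_i,c_{i+1})\in\ZZ^2M_i$. This is precisely membership in the row span of $\Lambda_i$, with coordinates $1,m$ and $\left[\begin{smallmatrix}a_1&a_m\\ b_1&b_m\end{smallmatrix}\right]$ for the wrap-around vertex.
\end{itemize}

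The paper gives no proof of its own here. It imports the statement from \cite[Theorem~5.2]{DKS}, where it comes out of the general treatment of weighted face rings of torus orbifolds. Your direct verification is more elementary and self-contained, specialised to $n=2$. It also makes explicit the reformulation the paper uses without comment at the start of the proof of Theorem~\ref{thm_main_lattice_intersection}. There, $\mathbf{x}\in L_i$ is characterised by $x_i=a_ip_i+b_iq_i$ and $x_{i+1}=a_{i+1}p_i+b_{i+1}q_i$ for some $p_i,q_i\in\ZZ$, which is exactly your condition $(x_i,x_{i+1})=(p_i,q_i)M_i$.

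One small notational slip: the two non-identity rows of $\Lambda_m$ are $(a_1,0,\dots,0,a_m)$ and $(b_1,0,\dots,0,b_m)$, not $\mathbf a$ and $\mathbf b$. This is harmless, because $e_2,\dots,e_{m-1}$ are also rows of $\Lambda_m$, so the lattice is the same either way.
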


Hence, we conclude the following corollary. 
\begin{corollary}\label{cor_KMZ}
For the case of $4$-dimensional toric orbifold $X(P, \lambda)$ associated with $(P, \lambda)$ described in Figure \ref{fig_labeling}, if it satisfies
\begin{equation}\label{eq_concentrated even deg condition}
\gcd\{ |a_ib_j-a_jb_i| \mid 1\leq i<j\leq m\}=1,
\end{equation}
then $H^2_{T^2}(X(P,\lambda);\ZZ)$ is isomorphic to $\wSR^2(P, \lambda)$ of \eqref{eq_wSR2} as $\ZZ$-modules. 
\end{corollary}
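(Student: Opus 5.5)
The plan is to combine Proposition~\ref{prop_eqiv_cohom_wSR} with Proposition~\ref{prop_DKS}. The only real work is to show that condition \eqref{eq_concentrated even deg condition} implies that the odd degree integral cohomology of $X(P,\lambda)$ vanishes. Once that is known, Proposition~\ref{prop_eqiv_cohom_wSR} gives a ring isomorphism $H^\ast_{T^2}(X(P,\lambda);\ZZ)\cong \wSR(P,\lambda)$. This isomorphism preserves degree, since the generators $x_i$ have degree $2$ on both sides. Restricting to degree $2$ therefore gives an isomorphism of $\ZZ$-modules $H^2_{T^2}(X(P,\lambda);\ZZ)\cong\wSR^2(P,\lambda)$. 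Proposition~\ref{prop_DKS} then identifies $\wSR^2(P,\lambda)$ with the set described in \eqref{eq_wSR2}.

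For the vanishing of odd cohomology, I would use the well-known computation of the integral cohomology of a $4$-dimensional toric orbifold, via the $q$-CW structure coming from $P\times T^2/\sim$ or via the results of Kuwata--Masuda--Zeng and Bahri--Sarkar--Song. The key facts are as follows. The space $X=X(P,\lambda)$ is simply connected: it is built from a point by attaching cells along the quotient construction, and $\pi_1$ is $\ZZ^2$ modulo the span of $\lambda_1,\dots,\lambda_m$. The $\lambda_i$ span $\ZZ^2$ because the $\gcd$ condition forces the $2\times 2$ minors to have $\gcd$ $1$, and a rank-$2$ sublattice has index equal to that $\gcd$. Hence $H_1(X)=0$, and by universal coefficients $H^1=0$. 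Since $X$ is a compact orientable rational homology $4$-manifold, $H^4=\ZZ$ and $H^0=\ZZ$.

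It remains to show $H^3(X;\ZZ)=0$. By universal coefficients, $H^3(X;\ZZ)\cong \mathrm{Hom}(H_3,\ZZ)\oplus \mathrm{Ext}(H_2,\ZZ)$. Here $H_3$ is torsion-free of rank zero, since $b_3=b_1=0$ rationally. So $H^3$ equals the torsion of $H_2(X)$. To control this torsion I would use the cofibration $X\setminus\{\text{fixed point neighbourhood}\}\to X\to S^4/G$, or directly the known formula that the torsion of $H^3(X;\ZZ)$ is $\ZZ/g$ with $g=\gcd\{|\det(\lambda_i,\lambda_j)|\}$ (Kuwata--Masuda--Zeng, Fu--So--Song). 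Under \eqref{eq_concentrated even deg condition} we have $g=1$, so $H^3=0$ and all odd cohomology vanishes.

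The main obstacle is this torsion computation in degree $3$. If I cannot cite the formula, my first attempt would be to compute $H_\ast(X)$ from the cell decomposition $X=X_{m-1}\cup_{f} e^4$. Here $X_{m-1}$ is obtained by gluing $m-1$ copies of $S^2$ along the edges, and is homotopy equivalent to a wedge of $(m-2)$ two-spheres. The attaching map $f\colon S^3/\mathbb{Z}_k\to X_{m-1}$ is the link of the last vertex. I would then identify the boundary map $H_3(\text{top cell})\to H_2(X_{m-1})$ rationally, and check that its cokernel is free exactly when the $\gcd$ of the minors is $1$. With $H^{odd}=0$ established, the corollary follows as described above.
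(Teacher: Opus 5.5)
Your proposal is correct and follows the paper's proof: the paper likewise cites Kuwata--Masuda--Zeng for $H^3(X(P,\lambda);\ZZ)\cong\ZZ^2/\spa_\ZZ\{\lambda_1,\dots,\lambda_m\}$, notes that the $\gcd$ condition forces this span to be all of $\ZZ^2$ so the cohomology is concentrated in even degrees, and then applies Propositions~\ref{prop_eqiv_cohom_wSR} and~\ref{prop_DKS} in degree $2$. One slip to fix: $\ZZ^2/\spa_\ZZ\{\lambda_i\}$ computes $H^3$, not $\pi_1$. Toric orbifolds are always simply connected, since the torus collapses to a point over each vertex, so $H_1=0$ holds without any hypothesis and your conclusion there is unaffected.
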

\begin{proof}
Note that every toric orbifold is simply connected and $H^3(X(P, \lambda);\ZZ)$ is isomorphic to $\ZZ^2/\spa_\ZZ \{(a_i, b_i)\mid i=1, \dots, m\}$. We refer to \cite{KMZ}. See also \cite{Jor, Fis} for relevant statements for toric varieties. The hypothesis \eqref{eq_concentrated even deg condition}
% $\gcd\{ |a_ib_j-a_jb_i| \mid 1\leq i<j\leq m\}=1$ 
implies that  $\spa_\ZZ\{(a_i, b_i)\mid i=1, \dots, m\}=\ZZ^2$, hence the cohomology $H^\ast(X(P, \lambda);\ZZ)$ is concentrated in even degrees. Now, the claim follows directly from Propositions~\ref{prop_eqiv_cohom_wSR} and \ref{prop_DKS}. 
\end{proof}

We now analyze the module structure of $H^2_{T^2}(X(P,\lambda);\ZZ)$ and construct an explicit basis. By Proposition~\ref{prop_DKS} and 
Corollary \ref{cor_KMZ} above, this problem reduces to studying the intersection of the lattices $L_1,\dots,L_m$. The main result of this section is formulated in Theorem~\ref{thm_main}.

For characteristic vectors $\lambda_i=(a_i, b_i)$  as above, we write 
\begin{equation}\label{eq_a_b}
\mathbf{a}\colonequals (a_1,\dots,a_m)\quad \text{and} \quad \mathbf{b}\colonequals (b_1,\dots,b_m)\in \mathbb{Z}^m,
\end{equation}
and regarding $\Lambda\colonequals \begin{bmatrix} \lambda_1 & \cdots & \lambda_m \end{bmatrix}$ as a linear map $\ZZ^m \to \ZZ^2$, let 
\begin{equation}\label{eq_K}
K\colonequals \ker(\Lambda)
=
\left\{
(t_1,\dots,t_m)\in \mathbb{Z}^m \;\middle|\;
\sum_{i=1}^m t_i a_i=0,\ \sum_{i=1}^m t_i b_i=0
\right\}.
\end{equation}
Note that $K$ is the space of linear relations of characteristic vectors $\lambda_1, \dots, \lambda_m$. 

Consider the linear operator 
\begin{equation}\label{eq_phi_linear_operator}
\phi\colon \ZZ^m \to \ZZ^m,~\phi(t_1, \dots, t_m)=(w_1, \dots, w_m)
\end{equation}
defined by $w_1=0$ and
\[ %\label{eq_linear_operator}
w_i:=\sum_{j=1}^{i-1}\det(\lambda_j,\lambda_i)t_j = \sum_{j=1}^{i-1}(a_jb_i-a_ib_j)t_j.
\]
for $i=2, \dots, m$.

\begin{lemma}\label{lem_w_m_0}
\begin{enumerate}
    \item If $(t_1, \dots, t_m)\in K$, then $w_m=0$. 
    \item The restriction $\phi|_K \colon K \to \ZZ^m$ is injective. 
\end{enumerate}
\end{lemma}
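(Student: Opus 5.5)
Both parts follow from bilinearity of the determinant in its first argument; the only substantive input for (2) is a single nonvanishing determinant $\det(\lambda_{m-1},\lambda_m)\neq 0$.

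\textbf{Part (1).} Since $\det(\cdot,\lambda_m)$ is linear in its first argument,
\[
w_m=\sum_{j=1}^{m-1}\det(\lambda_j,\lambda_m)t_j=\det\Bigl(\sum_{j=1}^{m-1}t_j\lambda_j,\ \lambda_m\Bigr).
\]
If $(t_1,\dots,t_m)\in K$, then $\sum_{j=1}^{m}t_j\lambda_j=0$, so $\sum_{j=1}^{m-1}t_j\lambda_j=-t_m\lambda_m$. Hence $w_m=-t_m\det(\lambda_m,\lambda_m)=0$.

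\textbf{Part (2).} Since $\phi$ is linear, it suffices to show that $t\in K$ and $\phi(t)=0$ imply $t=0$. Set $s_i:=\sum_{j=1}^{i}t_j\lambda_j\in\ZZ^2$, with $s_0=0$. Linearity in the first slot gives
\[
w_i=\det(s_{i-1},\lambda_i)\quad\text{for } 2\le i\le m,
\]
and $s_m=0$ because $t\in K$.

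We use descending induction, based on the one nonvanishing determinant available, $\det(\lambda_{m-1},\lambda_m)\neq 0$. This holds because $\lambda_{m-1},\lambda_m$ are the characteristic vectors of the two edges meeting at a vertex, so they are linearly independent. Then:

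\begin{itemize}
\item \emph{Base.} From $s_m=0$ we get $s_{m-1}=-t_m\lambda_m$.
\item \emph{Key step.} Substituting into $w_{m-1}$ gives
\[
0=w_{m-1}=\det(s_{m-2},\lambda_{m-1})=\det(s_{m-1}-t_{m-1}\lambda_{m-1},\lambda_{m-1})=-t_m\det(\lambda_m,\lambda_{m-1}).
\]
Hence $t_m=0$ and $s_{m-1}=0$.
\item \emph{Iteration.} Now $\sum_{j=1}^{m-1}t_j\lambda_j=0$. Its partial sums are $s_0,\dots,s_{m-1}$, and the conditions $w_2=\dots=w_{m-1}=0$ are the same conditions for the shorter sequence $\lambda_1,\dots,\lambda_{m-1}$.
\end{itemize}

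The iteration is where I expect the obstacle. Consecutive vectors $\lambda_{k-1},\lambda_k$ are still independent, but the cyclic pair $\lambda_k,\lambda_1$ may not be, so the shorter sequence is not itself a characteristic function on a polygon. The argument must therefore use only the adjacency $\det(\lambda_{k-1},\lambda_k)\neq 0$, and the inductive hypothesis should be stated for an arbitrary finite sequence of vectors in which consecutive pairs are linearly independent.

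The step above needs only $\det(\lambda_{k-1},\lambda_k)\neq 0$ for the current last index $k$. Repeating it gives $t_{m-1}=0$, then $t_{m-2}=0$, and so on down to $s_2=0$. Finally $s_1=t_1\lambda_1=s_2-t_2\lambda_2$ with $\lambda_1,\lambda_2$ independent, which forces $t_1=t_2=0$. Hence $t=0$.
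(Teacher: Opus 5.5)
Your proof is correct. Part (1) matches the paper's argument. Part (2) is also correct, but it takes a different route from the paper.

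\textbf{How the two arguments differ.} The paper uses forward elimination. Each $w_i=\sum_{j<i}\det(\lambda_j,\lambda_i)t_j$ involves only $t_1,\dots,t_{m-1}$. Taken over $i=2,\dots,m$, these form a lower-triangular system with diagonal entries $\det(\lambda_{i-1},\lambda_i)\neq 0$. So $w_2=\cdots=w_m=0$ forces $t_1=\cdots=t_{m-1}=0$ without using $t\in K$; in effect, $\ker\phi=\ZZ\langle e_m\rangle$ on all of $\ZZ^m$. Membership in $K$, together with $\lambda_m\neq 0$, is used only in the last line to get $t_m=0$.

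You instead rewrite $w_i=\det(s_{i-1},\lambda_i)$ using partial sums, use $s_m=0$ from the start, and peel off indices from the top.

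\textbf{What each route buys.} Your version never uses $w_m$, which is automatically zero on $K$ by part (1). So it shows directly that $t\mapsto(w_2,\dots,w_{m-1})$ is already injective on $K$. The paper's version is shorter. It also makes your worry about the cyclic pair $(\lambda_k,\lambda_1)$ irrelevant: a triangular system needs no re-indexed inductive hypothesis over shorter sequences.

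\textbf{One correction to your framing.} You say the only substantive input for (2) is the single determinant $\det(\lambda_{m-1},\lambda_m)\neq 0$. That is not accurate. Your iteration uses every $\det(\lambda_{k-1},\lambda_k)\neq 0$ for $k=2,\dots,m$, exactly as the paper's proof does. These conditions are genuinely needed: if $\lambda_1=\lambda_2$, then $e_1-e_2\in K$ and $\phi(e_1-e_2)=0$. Your later remarks about using only adjacent pairs are the correct description, and your opening sentence should say the same.
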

\begin{proof}
(1) For $(t_1, \dots, t_m)\in K$, we have $\sum_{j=1}^m t_j\lambda_j=\mathbf{0}$. 
Hence, we obtain  
\[
w_m:=\sum_{j=1}^{m-1}\det(\lambda_j,\lambda_m)t_j =\sum_{j=1}^{m}\det(\lambda_j,\lambda_m)t_j = \det \left( \sum_{j=1}^m t_j\lambda_j, \lambda_m\right)=0.  
\]

(2) Assume that $\phi(t_1, \dots, t_m)=(w_1, \dots, w_m)=(0,\dots, 0)$ for $(t_1, \dots, t_m)\in K$. Since $w_2=\det(\lambda_1, \lambda_2)t_1$ with  $\det(\lambda_1, \lambda_{2})\neq 0$, we have $t_1=0$. Next, we have 
\[
w_3=\det (\lambda_1, \lambda_3)t_1+\det (\lambda_2, \lambda_3)t_2=0,
\]
which implies that $t_2=0$ because $t_1=0$ and $\det (\lambda_2, \lambda_3)\neq 0$. Continuing this procedure and using the fact that $\det (\lambda_i, \lambda_{i+1})\neq 0$, we have $t_1=\cdots =t_{m-1}=0$. Finally, since $(t_1, \dots, t_m)\in K$ and $\lambda_m=(a_m, b_m)$ is a nonzero vector, we conclude that $t_m=0$. 
\end{proof}

\begin{theorem}\label{thm_main_lattice_intersection}
For the lattices $L_i$ defined above for $i=1, \dots, m$,  it follows that 
\[
L_1\cap \cdots \cap L_m = \mathbb{Z}\left<\mathbf{a}\right> \oplus \mathbb{Z}\left<\mathbf{b}\right>\oplus \phi(K).
\]
In particular, if $\{\kappa_1, \dots, \kappa_{m-2}\}$ is a $\mathbb{Z}$-basis of $K$, then
$\{ \mathbf{a}, \mathbf{b}, \phi(\kappa_1), \dots,  \phi(\kappa_{m-2})\}$
is a $\mathbb{Z}$-basis of $L_1\cap\cdots\cap L_m$.
\end{theorem}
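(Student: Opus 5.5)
The plan is to reinterpret each $L_i$ as a local linearity condition and then identify $L_1\cap\cdots\cap L_m$ with the integral piecewise-linear functions on the complete fan spanned by $\lambda_1,\dots,\lambda_m$.

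\emph{Step 1: a local reformulation of $L_i$.} Reading off the rows of $\Lambda_i$, a vector $c=(c_1,\dots,c_m)$ lies in $L_i$ if and only if the pair $(c_i,c_{i+1})$ (indices mod $m$) lies in the $\ZZ$-span of $(a_i,a_{i+1})$ and $(b_i,b_{i+1})$. The other coordinates are unconstrained because of the identity blocks. Equivalently, there is an integral vector $u_i=(p_i,q_i)\in\ZZ^2$ with
\[
c_i=\langle u_i,\lambda_i\rangle \quad\text{and}\quad c_{i+1}=\langle u_i,\lambda_{i+1}\rangle .
\]
Since $\lambda_i,\lambda_{i+1}$ are linearly independent, $u_i$ is uniquely determined by $c$. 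So $c\in\bigcap_i L_i$ means: for every vertex $v_i$ there is an integral $u_i$ reproducing $c$ on the two edges at $v_i$.

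\emph{Step 2: gluing the local data.} Let $c\in\bigcap_i L_i$.
\begin{itemize}
\item Adjacent vectors $u_{i-1}$ and $u_i$ both reproduce $c_i$ on $\lambda_i$, so $u_i-u_{i-1}\in\ZZ^2$ is orthogonal to $\lambda_i$.
\item Because $\lambda_i$ is primitive, the integral vectors orthogonal to it are exactly the integer multiples of $J\lambda_i\colonequals(-b_i,a_i)$. Hence $u_i-u_{i-1}=t_iJ\lambda_i$ for a unique $t_i\in\ZZ$, with $u_0\colonequals u_m$.
\item Going once around the polygon gives $\sum_{i=1}^m t_iJ\lambda_i=0$. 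Since $J$ is invertible, this says exactly $(t_1,\dots,t_m)\in K$.
\end{itemize}
Now write $u_j=u_m+\sum_{k\le j}t_kJ\lambda_k$ and use $\langle J\lambda_k,\lambda_j\rangle=\det(\lambda_k,\lambda_j)$, noting that the term $k=j$ vanishes. This gives
\[
c_j=\langle u_{j},\lambda_j\rangle=\langle u_m,\lambda_j\rangle+\sum_{k<j}\det(\lambda_k,\lambda_j)t_k ,
\]
that is, $c=p\,\mathbf a+q\,\mathbf b+\phi(t)$ with $u_m=(p,q)$. For $j=1$ this uses $u_1$, and is consistent with $u_m$ because $t_1J\lambda_1\perp\lambda_1$.

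\emph{Step 3: the reverse inclusion.} Take $(p,q)\in\ZZ^2$ and $t\in K$. Define $u_m=(p,q)$ and $u_j$ by the same formula. These are integral vectors, and Lemma~\ref{lem_w_m_0}(1), equivalently $t\in K$, guarantees that $u_m$ recomputed after a full turn agrees with the starting value, so the definitions are consistent. The displayed identity then shows that $c=p\mathbf a+q\mathbf b+\phi(t)$ satisfies the condition of Step 1 at every vertex. Thus $c\in L_i$ for all $i$.

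\emph{Step 4: the sum is direct.} Suppose $p\mathbf a+q\mathbf b+\phi(t)=0$ with $t\in K$. Then the associated $u_j$ all reproduce the zero vector on two independent vectors, so every $u_j=0$. This forces $t_jJ\lambda_j=0$, hence $t=0$, and then $p=q=0$. The map $\phi|_K$ is injective by Lemma~\ref{lem_w_m_0}(2).

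\emph{Rank of $K$ and the basis statement.} $K$ has rank $m-2$, since $\Lambda$ has rank $2$ over $\QQ$. Therefore a $\ZZ$-basis of $K$ maps under $\phi$ to a basis of $\phi(K)$, which gives the stated basis $\{\mathbf a,\mathbf b,\phi(\kappa_1),\dots,\phi(\kappa_{m-2})\}$.

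The main obstacle I expect is Step 2. It requires correct bookkeeping of the cyclic indexing: that $\Lambda_m$ pairs $\lambda_m$ with $\lambda_1$, and that the base point $u_m$ is the one in which $\phi$ has no $t_m$-dependence. It also relies essentially on primitivity of $\lambda_i$ to force the jumps $t_i$ to be integers.
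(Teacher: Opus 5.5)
Your proposal is correct and follows the paper's proof essentially step for step. It uses the same local description of $L_i$ via an integral $(p_i,q_i)$, the same appeal to primitivity to write $u_i-u_{i-1}=t_i(-b_i,a_i)$, the same summation around the polygon to get $t\in K$, and the same explicit reconstruction for the reverse inclusion.

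The only variation is in the directness step. The paper evaluates at coordinates $1$ and $m$, where $\phi(t)$ vanishes, uses the independence of $\lambda_1,\lambda_m$ to get $\alpha=\beta=0$, and then invokes Lemma~\ref{lem_w_m_0}(2). You instead show that all $u_j$ vanish. Both arguments work.
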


\begin{proof}
We first claim that 
\begin{equation}\label{eq_int_equal_sum}
L_1\cap \cdots \cap L_m=\mathbb{Z}\left<\mathbf{a}\right> + \mathbb{Z}\left<\mathbf{b}\right>+ \phi(K).
\end{equation}
We start with the inclusion  $L_1\cap \cdots \cap L_m\subseteq\mathbb{Z}\left<\mathbf{a}\right> + \mathbb{Z}\left<\mathbf{b}\right>+ \phi(K)$.
Note that $\mathbf{x}=(x_1, \dots, x_m)\in L_i$ if and only if 
\begin{equation}\label{eq_coord_condition}
x_i=a_ip_i+b_iq_i,\quad \text{and} \quad x_{i+1}=a_{i+1}p_i+b_{i+1}q_i
\end{equation}
for some $p_i, q_i\in \ZZ$. Therefore, for $\mathbf{x}=(x_1, \dots, x_m)\in L_1\cap\cdots \cap L_m$, there exist
$p_1, q_1, p_2, q_2, \dots, p_m, q_m\in \ZZ$ 
such that \eqref{eq_coord_condition} holds for each $i=1, \dots, m$. Here, indices are taken modulo $m$, that is, $(a_{m+1}, b_{m+1})=(a_1, b_1)$. Hence, we get 
\[
p_ia_i+q_ib_i= p_{i-1}a_i + q_{i-1}b_i,
\]
for each $i=1, \dots, m$, where we set $(p_0, q_0)=(p_m, q_m)$. 

Since $\lambda_i=(a_i, b_i)$ is primitive for each $i=1,\ldots,m$, there exists $t_i\in\ZZ$ such that
\begin{equation}\label{eq_pq_ab}
(p_i-p_{i-1}, q_i- q_{i-1}) = t_i (-b_i, a_i)
\end{equation}
Summing Equation~\eqref{eq_pq_ab} over $i=1, \dots, m$, we get 
\[
0=\sum_{i=1}^m (p_i-p_{i-1})  = - \sum_{i=1}^m b_it_i \quad \text{and} \quad 
0=\sum_{i=1}^m (q_i-q_{i-1})  =  \sum_{i=1}^m a_it_i,
\]
implying that $(t_1, \dots, t_m)\in K$. 
Similarly, summing the first $i$ equations in~\eqref{eq_pq_ab} gives
\[
p_i=p_m - \sum_{j=1}^i b_jt_j \quad \text{and} \quad q_i=q_m + \sum_{j=1}^i a_jt_j.
\]
Therefore, we have for each $i=1, \dots, m$,
\begin{align*}
x_i&=p_ia_i+q_ib_i\\
&=\left(p_m - \sum_{j=1}^i b_jt_j\right) a_i + \left( q_m + \sum_{j=1}^i a_jt_j \right) b_i \\
&=p_ma_i+q_mb_i + \sum_{j=1}^i (a_jb_i - a_ib_j) t_j\\
&= p_ma_i+q_mb_i + \sum_{j=1}^{i-1} (a_jb_i - a_ib_j) t_j.
\end{align*}
This establishes that $\mathbf{x}=(x_1, \dots, x_m)\in \mathbb{Z}\left<\mathbf{a}\right> + \mathbb{Z}\left<\mathbf{b}\right>+ \phi(K)$. 

We now prove the reverse inclusion $\mathbb{Z}\left<\mathbf{a}\right> + \mathbb{Z}\left<\mathbf{b}\right>+ \phi(K)\subseteq L_1\cap \cdots \cap L_m$. For an element $\alpha \mathbf{a} + \beta \mathbf{b} + \phi(t)$ with $\alpha, \beta\in \mathbb{Z}$ and $t=(t_1, \dots, t_m) \in K$, we set 
\[
p_i=\alpha - \sum_{j=1}^i b_jt_j \quad \text{and} \quad q_i=\beta + \sum_{j=1}^i a_jt_j.
\]
Observe that $p_m= \alpha$ and $q_m= \beta$ as $t\in K$.
Then, writing $x_i$ as the $i$-th coordinate of $\alpha \mathbf{a} + \beta \mathbf{b} + \phi(t)$, we have 
\[
x_i = \alpha a_i + \beta b_i + \sum_{j=1}^{i-1}(a_jb_i-a_ib_j)t_j = \alpha a_i + \beta b_i + \sum_{j=1}^{i}(a_jb_i-a_ib_j)t_j  =  a_ip_i+b_iq_i
\]
for $i=1,\dots, m-1$. Furthermore, it follows that 
\begin{equation*}
% \label{eq_m_coor}
\begin{split}
x_m& =\alpha a_m + \beta b_m + \sum_{j=1}^{m-1}(a_jb_m-a_mb_j)t_j\\
&= \alpha a_m + \beta b_m + \sum_{j=1}^{m-1} \det (\lambda_j, \lambda_m)t_j  \\
%&= \alpha a_m + \beta b_m + \det ( t_1\lambda_1 + \cdots + t_{m-1}\lambda_{m-1}, \lambda_m)\\
&= \alpha a_m + \beta b_m 
\end{split}
\end{equation*}
where the last equality follows from Lemma \ref{lem_w_m_0}-(1). A similar computation gives the second equation in \eqref{eq_coord_condition}. Therefore
$\alpha \mathbf{a} + \beta \mathbf{b} + \phi(t)\in L_1 \cap \cdots \cap L_m$.
% and hence Equation~\eqref{eq_int_equal_sum} holds.

It remains to show that the right hand side of \eqref{eq_int_equal_sum} is indeed a direct sum $\ZZ\langle\textbf{a}\rangle\oplus\ZZ\langle\textbf{b}\rangle\oplus\phi(K)$. 
Assume that $\alpha \mathbf{a} + \beta \mathbf{b} + \phi(t)=0$ for some $\alpha, \beta\in \ZZ$ and $t\in K$. Note that if $\phi(t)=(w_1, \dots, w_m)$, then $w_1=0$ by convention and $w_m=0$ by Lemma \ref{lem_w_m_0}-(1). Then, we have 
\[
\alpha a_1+ \beta b_1 = \alpha a_m + \beta b_m = 0
\]
which implies that $\alpha=\beta=0$ because $\lambda_1=(a_1, b_1)$ and $\lambda_m=(a_m, b_m)$ are linearly independent. Hence, the result follows.

Finally, the second claim follows directly from Lemma \ref{lem_w_m_0}-(2). This completes the proof.
\end{proof}

\begin{theorem}\label{thm_main}
Let $X(P, \lambda)$ be a $4$-dimensional toric orbifold satisfying Equation~\eqref{eq_concentrated even deg condition}.
Then there is an isomorphism of $\ZZ$-modules
\begin{equation}\label{eq_wSR2_decomp}
\Psi\colon
\mathbb{Z}\left<\mathbf{a}\right> \oplus \mathbb{Z}\left<\mathbf{b}\right>\oplus \phi(K)\to
H^2_{T^2}(X(P, \lambda);\ZZ)\cong\wSR^2(P,\lambda)
\end{equation}
given by $\Psi(t_1,\ldots,t_m)=t_1x_1+\cdots+t_mx_m$.
Moreover, if $\{\kappa_1,\ldots,\kappa_{m-2}\}$ forms a basis of~$K$, then
\[
\{ \Psi(\mathbf{a}), \Psi(\mathbf{b}), (\Psi\circ\phi)(\kappa_1), \dots,  (\Psi\circ\phi)(\kappa_{m-2})\}
\]
forms a basis of $H^2_{T^2}(X(P,\lambda);\ZZ)$.
\end{theorem}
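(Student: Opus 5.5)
This theorem follows by combining the results already established, so the plan is to assemble Corollary~\ref{cor_KMZ}, Proposition~\ref{prop_DKS} and Theorem~\ref{thm_main_lattice_intersection}.

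First, since $X(P,\lambda)$ satisfies \eqref{eq_concentrated even deg condition}, Corollary~\ref{cor_KMZ} gives an isomorphism of $\ZZ$-modules $H^2_{T^2}(X(P,\lambda);\ZZ)\cong\wSR^2(P,\lambda)$. This accounts for the second isomorphism in \eqref{eq_wSR2_decomp}.

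Next, I will describe $\wSR^2(P,\lambda)$ through Proposition~\ref{prop_DKS}. By \eqref{eq_wSR2}, $\wSR^2(P,\lambda)$ is exactly the image of $\bigcap_{i=1}^m L_i\subseteq\ZZ^m$ under the map $(c_1,\dots,c_m)\mapsto\sum_i c_ix_i$. This map is injective on $\ZZ^m$, because $x_1,\dots,x_m$ are linearly independent in degree $2$ of $\SR(P)$: the ideal $\mathcal{I}$ is generated in degree at least $4$, as a single facet is never empty. Hence the map is a $\ZZ$-module isomorphism $\bigcap_i L_i\to\wSR^2(P,\lambda)$. Precomposing it with the equality $\ZZ\langle\mathbf a\rangle\oplus\ZZ\langle\mathbf b\rangle\oplus\phi(K)=\bigcap_i L_i$ from Theorem~\ref{thm_main_lattice_intersection} shows that $\Psi$ is well defined and an isomorphism.

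For the basis statement, Theorem~\ref{thm_main_lattice_intersection} says that $\{\mathbf a,\mathbf b,\phi(\kappa_1),\dots,\phi(\kappa_{m-2})\}$ is a $\ZZ$-basis of the intersection; this uses the injectivity of $\phi|_K$ from Lemma~\ref{lem_w_m_0}. An isomorphism carries a basis to a basis, so applying $\Psi$ gives the claimed basis of $H^2_{T^2}(X(P,\lambda);\ZZ)$.

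The only point that needs care is the injectivity of the coefficient map in degree $2$, which I address by the degree argument above. No genuine obstacle is expected, since the substantive work is already done in Theorem~\ref{thm_main_lattice_intersection}.
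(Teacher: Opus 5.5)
Your proposal is correct and matches the paper's proof, which simply combines Corollary~\ref{cor_KMZ} and Proposition~\ref{prop_DKS} with Theorem~\ref{thm_main_lattice_intersection}; the paper's one-line proof mis-cites the latter as Theorem~\ref{thm_main}, and yours cites the intended result. Your extra remark that $x_1,\dots,x_m$ are linearly independent in degree two, because $\mathcal{I}$ is generated in degree at least four, correctly makes explicit the injectivity of the coefficient map, which the paper leaves implicit.
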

\begin{proof}
The proof follows directly from Theorems \ref{prop_DKS} and \ref{thm_main}
\end{proof}

In practice, one can compute a basis of $K$ using the Smith normal form or the Hermite normal form. The following example illustrates this calculation.

\begin{example}\label{ex_snf}
Consider the toric orbifold $X(P,\lambda)$ associated with a $4$-gon $P$ and the characteristic function $\lambda$ on $P$  defined by $\lambda_1=(-2,1),\lambda_2=(1,-2), \lambda_3=(2,1)$ and $\lambda_4=(1,2)$. For the matrix 
\[
\Lambda = \begin{bmatrix}
-2 & 1 & 2 & 1 \\
1 & -2 & 1 & 2
\end{bmatrix},
\]
the Smith normal form of $\Lambda$ is given by 
\[
U\Lambda V=\begin{bmatrix} 1&0&0&0 \\ 0 & 1 & 0 & 0\end{bmatrix}
\]
where one can take unimodular matrices $U\in \GL_2(\ZZ)$ and $V\in \GL_4(\ZZ)$ as
\[
U=\begin{bmatrix} 0 & 1 \\ 1 & 2\end{bmatrix} \quad \text{and} \quad
V=\begin{bmatrix} 1&  1&  5&  7\\
0 & 1&  4&  5 \\
0 & 1&  3&  5 \\
0&  0&  0& -1
\end{bmatrix}.
\]
Then, the last two column vectors of $V$ form a basis of $K=\ker (\Lambda\colon \ZZ^4 \to \ZZ^2)$. Finally, Theorem  \ref{thm_main_lattice_intersection} gives us the following basis of $L_1\cap \cdots \cap L_5$:
\[
\{(-2,1,2,1), (1,-2,1,2),(0, 15, 0,0), (0,21,-3,0)\}.
\]
Therefore, by Theorem \ref{thm_main}, we obtain a basis of $H^2_{T^2}(X(P, \lambda);\ZZ)\cong \wSR^2(P, \lambda)$ as follows
\[
\{-2x_1+x_2+x_3+x_4, x_1-2x_2+x_3+2x_4, 15x_2, 21x_2-3x_3\}.
\]
\end{example}

\section{Applications}\label{sect_applications}

\subsection{Construction of algebraic cellular bases}\label{subsec_alg_cel_basis}
In \cite{FSS2}, the authors studied the integral cohomology of a $4$-dimensional toric orbifold $X(P, \lambda)$ such that $\lambda$ satisfies 
\begin{equation}\label{eq_smooth pt Lambda}
\lambda_{m-1}=(1,0) \quad \text{and} \quad \lambda_m=(0,1).
\end{equation}
This condition is equivalent to the existence of a smooth fixed point in $X(P, \lambda)$. In this case, $X(P, \lambda)$ admits a cellular structure 
\begin{equation}\label{eq_CW}
X(P, \lambda) \simeq \left(\bigvee_{i=1}^{m-2} S^2\right) \cup e^4,
\end{equation}
which gives rise to cellular cohomology classes in $H^\ast(X(P, \lambda);\ZZ)$. Moreover, since the cellular structure \eqref{eq_CW} consists of even dimensional cells only, their corresponding cohomology classes form a basis. 
The authors then identified these classes, via the isomorphism \eqref{eq_cohmgy_isom}, with elements of $\wSR(P, \lambda)/\mathcal{J}$. The resulting basis is called the \emph{algebraic cellular basis} for $H^\ast(X(P, \lambda);\ZZ)$. 

\begin{definition}\label{dfn_alg cell basis}
Let $X(P,\lambda)$ be a $4$-dimensional toric orbifold satisfying~\eqref{eq_smooth pt Lambda}. Its \emph{algebraic cellular basis} is the set
$\{u_1,\ldots,u_{m-2};v\}$
of cohomology classes in $H^*(X(P,\lambda);\ZZ)$ defined by 
\begin{itemize}
\item
$u_i=\Phi^{-1}\left(\sum^{i}_{k=1}a_kb_ix_k+\sum^{m-2}_{k=i+1}a_ib_kx_k\right)$
\item
$v=\Phi^{-1}(x_{m-1}x_m)$,
\end{itemize}
where $\Phi\colon H^\ast(X(P, \lambda);\ZZ) \to \wSR(P, \lambda)/\mathcal{J}$ is the isomorphism in \eqref{eq_cohmgy_isom}.
\end{definition}

We note that the authors of \cite{FSS2} introduced the notion of a toric morphism between toric orbifolds and use this to justify that the algebraic cellular basis defined above is indeed a basis for $\widetilde{H}^\ast(X(P, \lambda);\ZZ)$. 
In particular, the well-definedness of degree-two elements, namely, showing that the polynomials defining $u_i$ are indeed elements of $\wSR(P, \lambda)$, requires series of technical calculations of toric morphisms. Using Theorem~\ref{thm_main}, we now give a short alternate proof regarding the degree-two elements.

\begin{theorem}\label{thm_alg cellular basis}
Let $X(P,\lambda)$ be a $4$-dimensional toric orbifold satisfying~\eqref{eq_smooth pt Lambda}.
Then the degree two elements $u_1,\ldots,u_{m-2}$ in Definition~\ref{dfn_alg cell basis} are well-defined and form a basis for $H^2(X(P,\lambda);\ZZ)$.
\end{theorem}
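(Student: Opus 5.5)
The plan is to reduce everything to Theorem~\ref{thm_main_lattice_intersection}. Concretely, I will exhibit each polynomial defining $u_i$ as an explicit element of $\ZZ\langle\mathbf{a}\rangle\oplus\ZZ\langle\mathbf{b}\rangle\oplus\phi(K)$, and then pass to the quotient by $\mathcal{J}$.

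\emph{Preliminaries.} Condition~\eqref{eq_smooth pt Lambda} gives $\det(\lambda_{m-1},\lambda_m)=1$, so \eqref{eq_concentrated even deg condition} holds. Hence Corollary~\ref{cor_KMZ}, Theorem~\ref{thm_main} and the isomorphism $\Phi$ of \eqref{eq_cohmgy_isom} are all available.

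\emph{A basis of $K$.} Since $\lambda_{m-1}=(1,0)$ and $\lambda_m=(0,1)$, the vectors
\[
\kappa_j=e_j-a_je_{m-1}-b_je_m,\qquad j=1,\dots,m-2,
\]
form a $\ZZ$-basis of $K$. Indeed, any $t\in K$ equals $\sum_{j\le m-2}t_j\kappa_j$, because its last two coordinates are forced by the two linear relations.

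\emph{Computing $\phi(\kappa_j)$.} Write $\phi(\kappa_j)=(w_1,\dots,w_m)$. Directly from \eqref{eq_phi_linear_operator}:
\begin{itemize}
\item $w_k=0$ for $k\le j$;
\item $w_k=a_jb_k-a_kb_j$ for $j<k\le m-2$;
\item $w_{m-1}=\det(\lambda_j,\lambda_{m-1})=-b_j$;
\item $w_m=a_j-a_j=0$, as predicted by Lemma~\ref{lem_w_m_0}-(1).
\end{itemize}

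\emph{Matching the coefficient vector of $u_i$.} Let $\mathbf{f}_i$ be the coefficient vector of $\sum_{k\le i}a_kb_ix_k+\sum_{k=i+1}^{m-2}a_ib_kx_k$. I claim
\[
\mathbf{f}_i=b_i\mathbf{a}+\phi(\kappa_i),
\]
and check it coordinatewise:
\begin{itemize}
\item for $k\le i$ both sides equal $b_ia_k$;
\item for $i<k\le m-2$ the right side is $b_ia_k+a_ib_k-a_kb_i=a_ib_k$;
\item in coordinate $m-1$ the right side is $b_i\cdot 1-b_i=0$;
\item in coordinate $m$ it is $b_i\cdot 0+0=0$.
\end{itemize}
By Theorem~\ref{thm_main_lattice_intersection} and Proposition~\ref{prop_DKS}, $\mathbf{f}_i\in L_1\cap\cdots\cap L_m$. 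So $\Psi(\mathbf{f}_i)\in\wSR^2(P,\lambda)$, and $u_i=\Phi^{-1}(\Psi(\mathbf{f}_i))$ is well defined.

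\emph{Basis of $\wSR^2$.} By Theorem~\ref{thm_main}, $\{\mathbf{a},\mathbf{b},\phi(\kappa_1),\dots,\phi(\kappa_{m-2})\}$ is a basis of the lattice. Passing to $\{\mathbf{a},\mathbf{b},\mathbf{f}_1,\dots,\mathbf{f}_{m-2}\}$ is a unimodular change of basis: it is triangular with $1$'s on the diagonal, since $\mathbf{f}_i-\phi(\kappa_i)\in\ZZ\langle\mathbf{a}\rangle$. Hence $\{\Psi(\mathbf{a}),\Psi(\mathbf{b}),\Psi(\mathbf{f}_1),\dots,\Psi(\mathbf{f}_{m-2})\}$ is a $\ZZ$-basis of $\wSR^2(P,\lambda)$.

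\emph{Degree-two part of $\mathcal{J}$.} The generators of $\mathcal{J}$ are the linear forms $\Psi(\mathbf{a})=\sum a_kx_k$ and $\Psi(\mathbf{b})=\sum b_kx_k$, the images of the degree-two generators of $H^*(BT^2)$. Since $\wSR^0=\ZZ$, the degree-two part of $\mathcal{J}$ is exactly $\ZZ\langle\Psi(\mathbf{a})\rangle\oplus\ZZ\langle\Psi(\mathbf{b})\rangle$. Therefore $(\wSR(P,\lambda)/\mathcal{J})^2$ is free with basis the images of the $\Psi(\mathbf{f}_i)$, and applying $\Phi^{-1}$ shows $u_1,\dots,u_{m-2}$ is a basis of $H^2(X(P,\lambda);\ZZ)$.

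\emph{Main obstacle.} The delicate point is this last identification of $\mathcal{J}$ in degree two. It relies on the paper's convention that the $y$'s are the linear forms $\sum_k a_kx_k$ and $\sum_k b_kx_k$, which is what the displayed matrix defining $y_1,\dots,y_m$ is meant to express even though it is written with the indices transposed. The remaining work is the index bookkeeping in the coordinatewise check of $\mathbf{f}_i=b_i\mathbf{a}+\phi(\kappa_i)$.
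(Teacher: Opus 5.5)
Your proposal is correct and follows the paper's proof essentially step by step: you use the same basis $\kappa_i=\xi_i$ of $K$, the same computation of $\phi(\xi_i)$, and the same identity $\mathbf{f}_i=b_i\mathbf{a}+\phi(\xi_i)$, which the paper writes as a congruence modulo $\mathcal{J}$. Your version is slightly more explicit in two places: it shows that the defining polynomial of $u_i$ itself lies in $\wSR^2(P,\lambda)$, and it checks that the degree-two part of $\mathcal{J}$ is exactly $\ZZ\langle\Psi(\mathbf{a})\rangle\oplus\ZZ\langle\Psi(\mathbf{b})\rangle$.
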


\begin{proof}
As the characteristic function $\lambda$ satisfies \eqref{eq_smooth pt Lambda}, one can simply take a basis of $K$ as $\{\xi_1, \dots, \xi_{m-2}\}\in \ZZ^m$ given by  
\begin{equation}\label{eq_xi_i}
\xi_i\colonequals (0,\dots, 0, \underset{\substack{\uparrow \\ i\text{-th}}}{1}, 0, \dots, 0, -a_i, -b_i),
\end{equation}
Then, by Theorem \ref{thm_main}, the set 
\[
\{\Psi(\mathbf{a}), \Psi(\mathbf{b}), (\Psi\circ\phi)(\xi_1), \dots, (\Psi\circ \phi)(\xi_{m-2})\}
\]
with $\mathbf{a}=(a_1, \dots, a_{m-2}, 1,0)$ and $\mathbf{b}=(b_1, \dots, b_{m-2}, 0,1)$ forms a basis of $\wSR(P, \lambda)$. Since $\Psi(\mathbf{a}), \Psi(\mathbf{b})$ are generators of the ideal $\mathcal{J}$, the remaining elements form a basis of $\wSR^2(P, \lambda)/\mathcal{J}\cong H^2(X(P, \lambda);\ZZ)$. 

We complete the proof by showing that $(\Psi\circ\phi)(\xi_i)=\Phi(u_i)$ for $i=1, \dots, m-2$. Indeed, by the definition $\phi$ given in \eqref{eq_phi_linear_operator} together with \eqref{eq_xi_i}, we have 
\begin{align*}
(\Psi\circ\phi)(\mu_i) &=  \sum_{k={i+1}}^{m} \det(\lambda_i, \lambda_k)x_k -a_i \det(\lambda_{m-1}, \lambda_m)x_m\\
&= \sum_{k={i+1}}^{m-2} (a_ib_k-a_kb_i)x_k - b_ix_{m-1} \\
&= \sum_{k={i+1}}^{m-2} (a_ib_k-a_kb_i)x_k + b_i\sum_{k=1}^{m-2}a_kx_k\\
&=\sum_{k=1}^ia_kb_ix_k + \sum_{k=i+1}^{m-2}a_ib_k x_k,
\end{align*}
where the second equality follows from \eqref{eq_smooth pt Lambda} and the third one also holds because $\Psi(\mathbf{a})=\sum_{k=1}^{m-2}a_kx_k+x_{m-1}\in \mathcal{J}$. Hence, we have established the claim. 
\end{proof}

\begin{example}
Let $X(P,\lambda)$ be the $4$-dimensional toric orbifold associated to a $4$-gon $P$ and the characteristic function $\lambda$ defined by $\lambda_1=(-2,1), \lambda_2=(1,-2), \lambda_3=(1,0)$ and $\lambda_4=(0,1)$. 
Following the proof of Theorem \ref{thm_alg cellular basis}, one can take 
\[
\xi_1=(1,0,2,-1) \quad \text{and} \quad \xi_2=(0,1,-1,2)
\]
as a basis of $K$. Then, Theorem \ref{thm_main} implies that 
\begin{equation}\label{eq_example_gen}
\{\Psi(\mathbf{a}), \Psi(\mathbf{b}), \Psi(\xi_1), \Psi(\xi_2)\}=
\{-2x_1+x_2+x_3, x_1-2x_2+x_4, 3x_2-x_3, 2x_3\}.
\end{equation}
for $\mathbf{a}=(-2,1,1,0)$ and $\mathbf{b}=(1,-2,0,1)$ is a basis of $\wSR^2(P, \lambda)$. Applying the linear ideal $\mathcal{J}$ generated by the first two elements of \eqref{eq_example_gen}, observe that 
\[
[3x_2-x_3]=[-2x_1+4x_2] \quad \text{and} \quad [2x_3]= [4x_1-2x_2],
\]
whose preimages via the map $\Phi$ of \eqref{eq_cohmgy_isom} agree with the algebraic cellular basis of Definition \ref{dfn_alg cell basis} for  $X(P, \lambda)$.
\end{example}

\subsection{Description of Cartier divisors on toric surfaces}
As discussed in Remark \ref{rmk_toric_var}, if the characteristic pair $(P, \lambda)$ is defined from a rational polytope in $M\otimes_\ZZ \RR$, where $M$ is the character lattice of $T^n$, then $X(P, \lambda)$ can also be regarded as a topological model of the toric variety $X_\Sigma$ associated with the normal fan $\Sigma$ of~$P$.  

To be more precise, there is a bijection map between the set $\Sigma^{(1)}$ of $1$-dimensional cones and the set $\mathcal{F}(P)$ of facets. Each $\rho_i\in \Sigma^{(1)}$ is the cone generated by $\lambda(F_i)\in \ZZ^n$ for the facet $F_i$ corresponding to~$\rho_i$, where we identify $\ZZ^n$ with the lattice $N$ of 1-parameter subgroups of $T^n$. In addition, the set $\{\rho_1, \dots, \rho_k\}$ forms a $k$-dimensional cone in $\Sigma$ if and only if the corresponding facets $\{F_1, \dots, F_k\}$ have a non-empty intersection. Then, the isomorphism \eqref{eq_rational_cohom_isom} is defined by sending each torus invariant Weil divisor $D_{\rho_i}$ to the generator  $x_i$ corresponding to $F_i\in \mathcal{F}(P)$.

In this case, Theorem~\ref{thm_main} gives a complete description of the group $\CDiv_T(X_\Sigma)$ of torus-invariant Cartier divisors and the Picard group $\Pic(X_\Sigma)$. Also, we obtain the index of $\Pic(X_\Sigma)$ in the class group $\clas(X_{\Sigma})$, which will be discussed in Corollary~\ref{cor} below.  The definitions and properties of these groups can be found in \cite[Chapter 4]{CLS}.

The following proposition is well-known. See, for instance, \cite[Theorem 4.2.8]{CLS} or \cite[Section 3.3]{Ful}.

\begin{proposition}\label{prop_Cdiv}
A Weil divisor $\sum_{\rho\in \Sigma^{(1)}}a_\rho D_\rho$ is a Cartier divisor if and only if for each maximal cone $\sigma$, there is $m_\sigma\in M$ such that $\left< m_\sigma, \lambda(F_\rho)\right>=-a_\rho$ for all $\rho\in \Sigma^{(1)}$. 
\end{proposition}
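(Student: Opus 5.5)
The plan is to prove the criterion locally on the standard affine cover, reading the condition as quantified over the rays $\rho\in\sigma(1)$ of each maximal cone $\sigma$. For rays outside $\sigma$ the condition cannot hold in general, so this is the intended meaning. I write $u_\rho=\lambda(F_\rho)$ for the primitive ray generator. Two standard facts will be used. First, $X_\Sigma$ is covered by the torus-invariant affine opens $U_\sigma$ with $\sigma$ maximal. Second, a Weil divisor is Cartier exactly when its restriction to each member of an open cover is principal. So the statement reduces to showing that, for a fixed maximal cone $\sigma$,
\[
D|_{U_\sigma}=\sum_{\rho\in\sigma(1)}a_\rho D_\rho|_{U_\sigma}
\]
is principal if and only if $\langle m_\sigma,u_\rho\rangle=-a_\rho$ for all $\rho\in\sigma(1)$, for some $m_\sigma\in M$. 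Only the $D_\rho$ with $\rho\in\sigma(1)$ meet $U_\sigma$, which is why only these rays enter.

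The first key step is the divisor of a character. For $m\in M$, the rational function $\chi^m$ has no zeros or poles on the torus $T_N$. Its order of vanishing along $D_\rho$ is $\langle m,u_\rho\rangle$; I would check this by restricting to the affine chart $U_\rho\cong\mathbb{C}^\ast{}^{\,n-1}\times\mathbb{C}$, where $D_\rho$ is the vanishing locus of the last coordinate. Hence
\[
\operatorname{div}(\chi^m)=\sum_\rho\langle m,u_\rho\rangle D_\rho .
\]
This immediately gives the "if" direction: if $m_\sigma$ exists, then $D|_{U_\sigma}=\operatorname{div}(\chi^{-m_\sigma})|_{U_\sigma}$ is principal on $U_\sigma$, so $D$ is Cartier.

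The second key step, and the one I expect to need the most care, is the converse. Suppose $D|_{U_\sigma}=\operatorname{div}(f)|_{U_\sigma}$ for some $f\in\mathbb{C}(X_\Sigma)^\ast$. Since $D$ is supported on the boundary divisors, $f$ has neither zeros nor poles on $T_N\subset U_\sigma$. Thus $f$ is a unit of $\mathbb{C}[M]$, and units of a Laurent polynomial ring are of the form $c\chi^{m}$ with $c\in\mathbb{C}^\ast$ and $m\in M$. Then by the first step
\[
D|_{U_\sigma}=\operatorname{div}(\chi^{m})|_{U_\sigma}.
\]
Comparing the coefficients of $D_\rho$ for $\rho\in\sigma(1)$ gives $a_\rho=\langle m,u_\rho\rangle$, so $m_\sigma:=-m$ works. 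The only real inputs here are the unit computation in $\mathbb{C}[M]$ and the normality of $X_\Sigma$, which is needed so that orders of vanishing along the $D_\rho$ are defined. For these, and as a cross-check of the whole argument, I would cite \cite[Theorem~4.2.8]{CLS} or \cite[Section 3.3]{Ful}.
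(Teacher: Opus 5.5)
Your reading of the quantifier as $\rho\in\sigma(1)$ is the right one, and your outline is the standard argument behind \cite[Theorem~4.2.8]{CLS}; the paper gives no proof of its own and only cites that theorem. The ``if'' direction is fine. So is your unit argument in $\mathbb{C}[M]$, but only once you know $D|_{U_\sigma}=\operatorname{div}(f)|_{U_\sigma}$ for a \emph{single} rational function $f$. The gap is how you get that. You invoke the principle that a Weil divisor is Cartier exactly when its restriction to each member of an open cover is principal. That is true only for \emph{some} cover. Being Cartier gives principality on the members of some possibly much finer cover, not on the members of the prescribed cover $\{U_\sigma\}$. A locally principal divisor on an affine variety need not be principal: take a point $P\neq O$ on the affine curve $E\setminus\{O\}$, $E$ an elliptic curve, with the one-set cover. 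So the implication ``Cartier $\Rightarrow$ principal on every $U_\sigma$'' is the actual content of the ``only if'' direction, and your list of ``the only real inputs'' omits it.

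The missing fact is that a torus-invariant Cartier divisor on the affine toric variety $U_\sigma$ is of the form $\operatorname{div}(\chi^m)$; in particular $\Pic(U_\sigma)=0$. Its proof genuinely uses the torus action. One way to supply it:
\begin{itemize}
\item $\Gamma(U_\sigma,\mathcal{O}_{U_\sigma}(D))$ is a finitely generated $M$-graded module over $\mathbb{C}[\sigma^\vee\cap M]$, spanned by the characters $\chi^m$ with $\langle m,u_\rho\rangle\geq -a_\rho$ for $\rho\in\sigma(1)$.
\item Since $\mathcal{O}(D)$ is invertible, its stalk at the distinguished point $\gamma_\sigma$ is free of rank one. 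By Nakayama, one of these homogeneous generators $\chi^{m}$ generates it, so $D=\operatorname{div}(\chi^{-m})$ on some neighbourhood $V$ of $\gamma_\sigma$.
\item Every $D_\rho$ with $\rho\in\sigma(1)$ contains the orbit $O(\sigma)\ni\gamma_\sigma$, hence meets $V$. Comparing coefficients along these $D_\rho$ gives $\langle m,u_\rho\rangle=-a_\rho$, i.e.\ $m_\sigma=m$.
\end{itemize}
This step (see \cite[Section~4.2]{CLS}) closes the gap, and it even makes your unit computation unnecessary for the ``only if'' direction.
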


In fact, it is straightforward to see that the condition in Proposition \ref{prop_Cdiv} coincides with the integrality condition in Definition \ref{def_int_cond} for homogeneous polynomials of degree $2$ in $\SR(P)$, using the correspondence between $D_{\rho_i}$ and $x_i\in \SR(P)$. Hence, we conclude the following corollary. 

\begin{corollary}\label{cor}
Let $X_{\Sigma}$ be a toric variety of real dimension $4$ with $H^3(X_{\Sigma};\ZZ)=0$, and let $(P,\lambda)$ be its associated characteristic pair. Then, the following statements hold.
\begin{enumerate}
\item The group $\CDiv_T(X_\Sigma)$ of torus-invariant Cartier divisors is isomorphic to~$\wSR^2(P, \lambda)$ of \eqref{eq_wSR2} as $\ZZ$-modules.
\item The Picard group $\Pic(X_\Sigma)$ is isomorphic to $\phi(K)$ of \eqref{eq_wSR2_decomp} as $\ZZ$-modules. 
\item The index of $\Pic(X_\Sigma)$ in the class group $\clas(X_\Sigma)$ is
\[
\det\begin{bmatrix}
\textbf{a}^t	&\textbf{b}^t	&\phi(\kappa_1)^t	&\cdots	&\phi(\kappa_{m-2})^t
\end{bmatrix}
\]
for some basis  $\{\kappa_1,\ldots,\kappa_{m-2}\}$ of $K$.
\end{enumerate}
\end{corollary}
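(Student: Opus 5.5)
Throughout I use the standard toric exact sequence (\cite[Theorem 4.1.3, Proposition 4.2.5]{CLS})
\[
0\to M\xrightarrow{\ \operatorname{div}\ } \Div_T(X_\Sigma)\to \clas(X_\Sigma)\to 0,
\qquad
0\to M\to \CDiv_T(X_\Sigma)\to \Pic(X_\Sigma)\to 0,
\]
where $M\cong\ZZ^2$ has no linear relations among the rays, since $\Sigma$ is complete in dimension two. Identifying $\Div_T(X_\Sigma)=\bigoplus_\rho \ZZ D_\rho$ with $\ZZ^m$ via $D_{\rho_i}\mapsto e_i$, we have $\operatorname{div}(\chi^u)=\sum_i\langle u,\lambda_i\rangle D_{\rho_i}$. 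Hence $\operatorname{div}(M)$ is the $\ZZ$-span of $\mathbf{a}=\operatorname{div}(\chi^{e_1^*})$ and $\mathbf{b}=\operatorname{div}(\chi^{e_2^*})$.

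For (1), I apply Proposition~\ref{prop_Cdiv} cone by cone. The maximal cones are spanned by $\lambda_i,\lambda_{i+1}$, i.e.\ they correspond to the vertices $F_i\cap F_{i+1}$. The local Cartier condition asks for $m_\sigma\in M$ with $\langle m_\sigma,\lambda_i\rangle=-c_i$ and $\langle m_\sigma,\lambda_{i+1}\rangle=-c_{i+1}$. Equivalently, $(c_i,c_{i+1})$ is an integral combination $p(a_i,a_{i+1})+q(b_i,b_{i+1})$ with $(p,q)=-m_\sigma$. The remaining coordinates are unconstrained, so this is exactly membership of $(c_1,\dots,c_m)$ in $L_i$, as in \eqref{eq_coord_condition}. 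Therefore $\CDiv_T(X_\Sigma)$ corresponds to $\bigcap_i L_i$, which is $\wSR^2(P,\lambda)$ by Proposition~\ref{prop_DKS}. The sign conventions (inward versus outward normals) only replace $m_\sigma$ by $-m_\sigma$ and do not affect the lattice. This is the remark the paper makes after Proposition~\ref{prop_Cdiv}.

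For (2), Theorem~\ref{thm_main_lattice_intersection} gives $\bigcap_i L_i=\ZZ\langle\mathbf{a}\rangle\oplus\ZZ\langle\mathbf{b}\rangle\oplus\phi(K)$. The first two summands are precisely $\operatorname{div}(M)$. Therefore
\[
\Pic(X_\Sigma)=\CDiv_T(X_\Sigma)/\operatorname{div}(M)\cong\phi(K),
\]
and since $\phi|_K$ is injective by Lemma~\ref{lem_w_m_0}-(2), this group is free of rank $m-2$.

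For (3), I compare the chain $\operatorname{div}(M)\subset\CDiv_T\subset\Div_T=\ZZ^m$. Quotienting by $\operatorname{div}(M)$ gives
\[
[\clas(X_\Sigma):\Pic(X_\Sigma)]=[\ZZ^m:\CDiv_T(X_\Sigma)].
\]
Here $\Pic\to\clas$ is injective because $\CDiv_T\subset\Div_T$. The index is finite since both lattices have rank $m$.

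By Theorem~\ref{thm_main_lattice_intersection}, the vectors $\mathbf{a},\mathbf{b},\phi(\kappa_1),\dots,\phi(\kappa_{m-2})$ form a $\ZZ$-basis of $\CDiv_T$. The index of a full-rank sublattice of $\ZZ^m$ equals the absolute value of the determinant of a basis matrix. This gives the stated formula, up to sign; the sign can be fixed positive by reordering or negating a basis vector $\kappa_j$, which is why the statement says ``for some basis''.

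The main obstacle is step (1): carefully matching the Cartier condition of Proposition~\ref{prop_Cdiv} with the lattices $L_i$, including the wrap-around cone $\Lambda_m$. The hypothesis $H^3=0$ is used only to invoke Corollary~\ref{cor_KMZ} and the cohomological interpretation. The lattice identification in (1)–(3) holds regardless.
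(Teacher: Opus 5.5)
Your proposal is correct and follows essentially the same route as the paper. You identify the local Cartier condition on each maximal cone with membership in $L_i$, then use $\operatorname{div}(M)=\ZZ\langle\mathbf{a}\rangle\oplus\ZZ\langle\mathbf{b}\rangle$ in the exact sequences for $\Pic$ and $\clas$; you quotient directly where the paper invokes the Five Lemma. Your extra remarks are accurate refinements: the determinant in (3) equals the index only up to sign, which the choice of basis fixes, and the lattice identifications in (1)--(3) do not actually need $H^3=0$.
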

\begin{proof}
The proof of (1) is straightforward, as explained above. For~(2), recall from the proof of Corollary \ref{cor_KMZ} that the assumption $H^3(X_\Sigma;\ZZ)=0$ is equivalent to the condition that $\spa_{\ZZ} \{ (a_1, b_1), \dots, (a_m, b_m)\}=\ZZ^2$. Hence, we have the following diagram of exact sequences: 
\begin{equation}\label{eq_diagram}
\begin{tikzcd}
0\rar & M\arrow{d}{\cong}  \rar & \CDiv_T(X_\Sigma) \rar \arrow{d}{\cong}&  \Pic(X_\Sigma) \rar & 0\\
0 \rar & \ZZ^2 \rar & \mathbb{Z}\left<\mathbf{a}\right> \oplus \mathbb{Z}\left<\mathbf{b}\right>\oplus \phi(K) \rar &  \phi(K) \rar & 0,
\end{tikzcd}
\end{equation}
where the horizontal map $M \to \CDiv_T(X_\Sigma)$ is given by $m\mapsto \sum_{\rho\in \Sigma^{(1)}} \left< m, \lambda(F_\rho) \right>D_\rho$.  See \cite[Theorem 4.2.1]{CLS}) for the upper exact sequence. 
The commutativity of the left square induces a map  $\Pic(X_{\Sigma})\to\phi(K)$, which is an isomorphism by the Five Lemma. 

To prove~(3), consider the short exact sequence 
\begin{equation}\label{eq_ses_div_cl}
\begin{tikzcd}
    0\rar & M \rar&  \Div_{T}(X_\Sigma) \rar & \clas(X_\Sigma) \rar & 0
\end{tikzcd}
\end{equation}
for the group $\Div_{T}(X_\Sigma)=\bigoplus_{\rho\in \Sigma^{(1)}}\ZZ\left<D_\rho \right>$ of Weil divisors. 
One can refer to \cite[Theorem 4.1.3]{CLS} for the exactness. Combining \eqref{eq_ses_div_cl} with \eqref{eq_diagram} concludes that 
\[
[\clas(X_\Sigma): \Pic(X_\Sigma)] = [\Div_T(X_\Sigma): \CDiv_T(X_\Sigma)] =  [\ZZ^m: \mathbb{Z}\left<\mathbf{a}\right> \oplus \mathbb{Z}\left<\mathbf{b}\right>\oplus \phi(K)].
\]
Hence, we have established the claim. 
\end{proof}

%
%\bibliographystyle{amsalpha}
%\bibliography{bibliography}
%
%\providecommand{\bysame}{\leavevmode\hbox to3em{\hrulefill}\thinspace}
%\providecommand{\MR}{\relax\ifhmode\unskip\space\fi MR }
%% \MRhref is called by the amsart/book/proc definition of \MR.
%\providecommand{\MRhref}[2]{%
%  \href{http://www.ams.org/mathscinet-getitem?mr=#1}{#2}
%}

\providecommand{\href}[2]{#2}

\end{document}